\newcommand*\patchAmsMathEnvironmentForLineno[1]{%
  \expandafter\let\csname old#1\expandafter\endcsname\csname #1\endcsname
  \expandafter\let\csname oldend#1\expandafter\endcsname\csname end#1\endcsname
  \renewenvironment{#1}%
     {\linenomath\csname old#1\endcsname}%
     {\csname oldend#1\endcsname\endlinenomath}}%
\newcommand*\patchBothAmsMathEnvironmentsForLineno[1]{%
  \patchAmsMathEnvironmentForLineno{#1}%
  \patchAmsMathEnvironmentForLineno{#1*}}%
\newcommand{\A}{\mathbb{A}}
\newcommand{\C}{\mathbb{C}}
\newcommand{\N}{\mathbb{N}}
\renewcommand{\P}{\mathbb{P}}
\newcommand{\R}{\mathbb{R}}
\renewcommand{\S}{\mathbb{S}}
\newcommand{\Z}{\mathbb{Z}}
\newcommand{\cE}{\mathcal{E}}
\newcommand{\cH}{\mathcal{H}}
\newcommand{\cL}{\mathcal{L}}
\newcommand{\cW}{\mathcal{W}}
\newcommand{\cX}{\mathcal{X}}
\renewcommand{\b}{\beta}
\renewcommand{\d}{\delta}
\newcommand{\e}{\varepsilon}
\renewcommand{\phi}{\varphi}
\renewcommand{\leq}{\leqslant}
\renewcommand{\geq}{\geqslant}
\newcommand{\abs}[1]{\left\lvert#1\right\rvert}
\newcommand{\norm}[1]{\left\|#1\right\|} 
\newcommand{\bal}{\mathrm{b}}
\renewcommand{\DH}{\mathrm{DH}}
\newcommand{\KE}{\mathrm{KE}}
\newcommand{\NA}{\mathrm{NA}}
\renewcommand{\Re}{\mathrm{Re}}
\DeclareMathOperator{\Aut}{Aut}
\DeclareMathOperator{\dd}{\sqrt{-1}\partial\bar{\partial}}
\DeclareMathOperator{\Ent}{Ent}
\DeclareMathOperator{\GL}{GL}
\DeclareMathOperator{\Ric}{Ric}
\DeclareMathOperator{\Tr}{Tr}
\numberwithin{equation}{section}       
\newtheorem{prop} {Proposition} [section]
\newtheorem{thm}[prop] {Theorem} 
\newtheorem{dfn}[prop] {Definition}
\newtheorem{lem}[prop] {Lemma}
\newtheorem{cor}[prop]{Corollary}
\newtheorem{rem}[prop]{Remark}
\theoremstyle{remark}
\newtheorem*{thmA}{\bf{Theorem A}} 
\newtheorem*{thmB}{\bf{Theorem B}} 
\newtheorem*{thmC}{\bf{Theorem C}}
\newtheorem*{dfn*}{\bf{Definition}}
\title[]{Quantization of the K\"ahler-Ricci flow and optimal destabilizer for a Fano manifold} 
\date{\today} 
\author{Tomoyuki Hisamoto}
\address{Tokyo Metropolitan University\\
Minami-Ohsawa\\
Tokyo\\ 
Japan}
\email{hisamoto@tmu.ac.jp}
\begin{document}

\maketitle

\maketitle
\setcounter{tocdepth}{1}

\begin{abstract}
For a Fano manifold, 
We consider the geometric quantization of the K\"ahler-Ricci flow 
and the associated entropy functional. 
Convergence to the original flow and entropy is established. 
It is also possible to 
formulate the finite-dimensional analogue of the optimal degeneration for the anti-canonical polarization. 
\end{abstract}

\tableofcontents

\section{Introduction}
\label{Introduction}

The Ricci flow introduced by Hamilton, 
used in Perelman's solution of the Poincar\'e conjecture 
and Brendle-Schoen's proof of the differentiable sphere theorem, 
has become powerful tool in modern differential geometry. 
In K\"ahler geometry, we should consider the K\"ahler-Ricci flow, 
which has recently atracted people's attention due to 
its remarkable applications in the problems of K\"ahler-Einstein metrics. 
In this paper, we formulate the geometric quantization 
of the K\"ahler-Ricci flow and discuss its basic properties and some applications.
We are particularly motivated by the optimal degeneration problem 
(explained in subsection \ref{section: optimal degeneration}), 
the multiplier ideal sheaf construction of \cite{His23}, 
and the study of delta-invariant by \cite{RTZ20}. 
For this reason the expositions are especially devoted to the Fano manifold setting. 
One can obtain the similar formulations for the Calabi-Yau
 or the canonically polarized manifolds. 
At any rate let $X$ be a complex Fano manifold 
so that the anti-canonical line bundle $-K_X$ is ample. 
Here we adopt the additive notation in taking the tensor 
of line bundles and denote the dual line bundle by the minus sign. 
The idea of the geometric quantization tells us that 
K\"ahler metrics are quantized to Hermitian forms 
on the finite-dimensional vector space of global sections $H^0(X, -k K_X)$ such that 
as the ``Planck constant" $1/k$ approaches to zero 
the quantized objects approximate the classical ones. 
Let $\cH$ be the collection of K\"ahler metrics with prescribed cohomology class 
$c_1(X)$. 
We denote the collection of the above Hermitian forms by $\cH_k$. 
The remarkable point is that one indeed has the taking Fubini-Study type metric map 
\begin{equation}
    f_k \colon \cH_k \to \cH, 
\end{equation}  
which directly transfers the quantized objects to the classical ones. 
Further, taking $L^2$ norms of sections one obtains the projection
$p_k \colon \cH \to \cH_k$ in the opposite direction. 
The composition map $b_k := p_k \circ f_k$ which we call the balancing map 
measures how the qunatized object is far from the quantized Einstein condition. 
Since $b_k(H)$ is positive Hermitian for any $H \in \cH_k$, 
one can takes the logarithm $\log{b_k(H)}$. 
as we will see in section \ref{Quantization}, 
this logarithm matrix can be seen as the quantization of the Ricci potential 
$\rho=\rho_{\omega}$ of the given K\"ahler metric $\omega$. 
Now we introduce the quantization of the (normalized) K\"ahler-flow 
as the evolution of the Hermitian forms $H=H_t$ which satisfies 

\begin{equation}\label{quantized flow 0}
\frac{1}{k}\frac{d}{dt}\log{H} = -\log{b_k(H)}+\log{H}. 
\end{equation}

Our first observation is that the quantized flow 
indeed approximates the original K\"ahler-Ricci flow as $k \to \infty$. 

\begin{thmA}\label{Theorem A}
    Let $\phi_t \in \cH$ be the K\"ahler-Ricci flow (\ref{KRF}) and $H_t$ be the quantized flow (\ref{quantized flow 0}) initiated from $H_0=p_k(\phi_0)$. 
    Then for any $T>0$ there exists a constant $C_T$ such that 
    \begin{equation}
    \abs{\phi_{(j+1)/k} -f_k(H_{j/k})} \leq C_T k^{-1}
    \end{equation}
    holds for any $(j+1)/k \leq T$. 
\end{thmA}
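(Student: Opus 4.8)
The plan is to transport the finite-dimensional flow to $\cH$ through the Fubini--Study map and to recognise the resulting curve as an approximate solution of the K\"ahler--Ricci flow, after which a parabolic stability estimate closes the gap. Set $\psi_t := f_k(H_t) \in \cH$. Differentiating and using the chain rule, $\dot\psi_t$ is the image under the differential of $f_k$ of the quantized velocity $\frac{d}{dt}\log H_t = k\big(\log H_t - \log b_k(H_t)\big)$. The heart of the matter is the quantization dictionary promised in section \ref{Quantization}: sharpening the statement that $\log b_k(H)$ quantizes the Ricci potential, I would prove, by the Tian--Yau--Zelditch--Catlin expansion of the Bergman kernel, the uniform estimate
\[
\norm{\,d f_k\big(k(\log H - \log b_k(H))\big) + \rho_{f_k(H)}\,}_{C^0} \le C\,k^{-1},
\]
together with the companion $\norm{f_k(p_k(\phi)) - \phi}_{C^0} \le C\,k^{-1}$ for the initial data. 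Granting these, $\psi_t$ solves $\dot\psi_t = -\rho_{\psi_t} + R_t$ with $\norm{R_t}_{C^0} \le C\,k^{-1}$ and $\psi_0 = \phi_0 + O(k^{-1})$; that is, $\psi_t$ is the K\"ahler--Ricci flow forced by a term of size $k^{-1}$.

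The second step is a stability estimate for the flow on $[0,T]$. Writing the potential form of the K\"ahler--Ricci flow as a parabolic complex Monge--Amp\`ere equation, the difference $w_t := \psi_t - \phi_t$ satisfies a linear parabolic inequality with bounded coefficients, the bounds depending only on the uniform a priori control of the genuine flow $\phi_t$ on $[0,T]$. A maximum-principle (Gr\"onwall) argument then propagates the $k^{-1}$ forcing and the $k^{-1}$ initial error to $\norm{w_t}_{C^0} \le C_T\,k^{-1}$ for all $t \le T$. The one-step index shift between $\phi_{(j+1)/k}$ and $f_k(H_{j/k})$ is then harmless: since $\phi_t$ moves by $O(k^{-1})$ over an interval of length $1/k$, one has $\phi_{(j+1)/k} = \phi_{j/k} + O(k^{-1}) = \psi_{j/k} + O(k^{-1}) = f_k(H_{j/k}) + O(k^{-1})$, which is the asserted bound.

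The main obstacle I anticipate is the uniformity of the Bergman asymptotics along the flow. The expansion governing $d f_k$ and $\log b_k$ holds with constants depending on the $C^m$-geometry of the underlying metric, so the displayed estimate is only available once the quantized flow $H_t$ is confined to a region where the metrics $f_k(H_t)$ stay uniformly bounded for large $k$. This forces a bootstrap, since the very a priori bounds that make the expansion uniform are what one is trying to deduce from the approximate-flow equation. I would resolve this by a continuity argument on $[0,T]$: propagate the uniform estimates of $\phi_t$ to $\psi_t$ on the (relatively open and closed) set of times where $w_t$ stays below a fixed threshold, and close the loop because the stability estimate improves $\norm{w_t}_{C^0}$ to order $k^{-1}$, strictly better than the threshold for $k$ large.
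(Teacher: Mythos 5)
Your route is genuinely different from the paper's, and as written it has a gap at its load-bearing step. The paper never works with the continuous-time comparison $\dot\psi_t = -\rho_{\psi_t}+R_t$ at all: it discretizes the quantized flow (\ref{quantized flow}) with step $1/k$, observes that the Euler scheme is \emph{exactly} the Bergman iteration $H^{(j+1)}=b_k(H^{(j)})$, proves a $k$-independent Lipschitz bound for $b_k$ (Lemma \ref{L-continuity}) so that the standard Euler-method error analysis gives $\abs{H_{j/k}-b_k^j(H_0)}\leq C_Tk^{-1}$ uniformly in $k$ (Theorem \ref{convergence of Euler's method}), and then quotes Berman's Theorem \ref{Berman} from \cite{Berm13} for the comparison of the Bergman iteration with the K\"ahler--Ricci flow; a triangle inequality and the $1$-Lipschitz property of $f_k$ finish. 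All of the hard Bergman-kernel analysis is thereby outsourced to \cite{Berm13}. Your plan, by contrast, re-derives that analysis in continuous time, and this is where it breaks down: the estimate
\begin{equation}
\norm{\,d f_k\big(k(\log H - \log b_k(H))\big) + \rho_{f_k(H)}\,}_{C^0} \leq C\,k^{-1}
\end{equation}
comes from the first-order Tian--Yau--Zelditch expansion, whose constant $C$ depends on \emph{higher-order} ($C^m$, $m\geq 3$ or so) norms of the metric $f_k(H_t)$ and on a positive lower bound for its K\"ahler form. Your proposed continuity/bootstrap argument only propagates a $C^0$ threshold on $w_t=\psi_t-\phi_t$, and a $C^0$ bound on the potential feeds nothing back into those $C^m$-dependent constants: the set of times where the expansion is uniformly valid is not shown to be closed, so the loop does not close. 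Controlling the higher derivatives of the Fubini--Study images along the iteration/flow is precisely the technical content of Berman's proof of Theorem \ref{Berman} (quasi-psh and Laplacian estimates propagated step by step), so your approach, completed honestly, amounts to reproving that theorem rather than using it.

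Two smaller points. First, your stability step is fine in principle: writing both flows in the Monge--Amp\`ere form (\ref{KRF3}), the difference satisfies a parabolic inequality to which the maximum principle applies, and an $O(k^{-1})$ forcing over time $T$ does yield $\norm{w_t}_{C^0}\leq C_Tk^{-1}$ --- but only \emph{after} the dictionary estimate is uniform, which is the unresolved issue above. Second, your handling of the index shift between $\phi_{(j+1)/k}$ and $f_k(H_{j/k})$ is correct but slightly misses its origin in the paper: there it is not a smallness-of-motion remark but bookkeeping, since $f_k(b_k^j(p_k(\phi_0)))=\b_k^{j+1}(\phi_0)$, so the $j$-th point of the quantized flow naturally compares with the $(j+1)$-st Bergman iterate. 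If you want to salvage your argument, the efficient fix is the paper's: prove the $k$-uniform Lipschitz estimate for $H\mapsto b_k(H)$ on $\cH_k$ and reduce to the discrete result of \cite{Berm13} instead of establishing uniform Bergman asymptotics along the flow yourself.
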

For the proof we compare the quantized flow with the Bergman iteration of
 \cite{Berm13}. 
In fact the latter is obtained as the difference equation of the former. 
Theorem A then follows from Berman's corresponding result for the Bergman iteration. 

In the study of the K\"ahler-Ricci flow, 
the most crucial steps are the Perelman's introduction of 
the $\cW$-functional and its monotonicity.
In K\"ahler geometry \cite{He16} defined the reduced entropy functional 
\begin{equation}
    S(\omega) = \int_X \rho e^\rho \omega^n 
\end{equation} 
and its monotonicity along the K\"ahler-Ricci flow. 
As the next subject of the present article 
we introduce the quantized version of He's entropy functional, as 
\begin{equation}
    S_k(H) := \Tr b_k(H)(\log{b_k(H)}-\log{H}). 
\end{equation} 
This is in fact equivalent to the relative entropy functional $S(A \vert B)$ 
frequently used in the quantized information theory. 
The quantized entropy $S_k$ seems not to enjoy the perfect monotonicity, 
however, we still have other desired properties and the following 
convergence result. 

\begin{thmB}\label{Theorem B}
    Let $\phi$ be a K\"ahler metric and $p_k(\phi)$ be the associated $L^2$-inner product 
    defined on the space of anti-canonical sections. 
    Then we have the convergence of the entropies: 
    \begin{equation}
    \lim_{k \to \infty}S_k(p_k(\phi)) =S(\phi). 
    \end{equation}
\end{thmB}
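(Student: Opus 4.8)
The plan is to read $S_k(p_k(\phi))$ as a quantum relative entropy and to pass to its classical limit through the Berezin--Toeplitz correspondence already set up for Theorem A. Write $\omega=\dd\phi$ and let $\rho$ be the Ricci potential, normalized by $\Ric(\omega)-\omega=\dd\rho$; recall that the canonical and the K\"ahler measures of $\phi$ are related by $e^{-\phi}=e^{\rho}\,\omega^n/n!$ up to a constant, so that He's functional $S(\phi)=\int_X\rho\,e^{\rho}\omega^n$ is exactly the classical relative entropy $\int_X\log\frac{d\mu}{d\nu}\,d\mu$ of $\mu=e^{\rho}\omega^n$ against $\nu=\omega^n$. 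On the quantum side, choosing a $p_k(\phi)$-orthonormal basis makes $p_k(\phi)$ the identity and $\log p_k(\phi)=0$, so in this basis $S_k(p_k(\phi))=\Tr\,b_k(p_k(\phi))\log b_k(p_k(\phi))=\Tr F\big(b_k(p_k(\phi))\big)$ with $F(x)=x\log x$. The whole problem is thus to compute the limit of $\Tr F(A_k)$ for the generalized spectrum $A_k$ of the pencil $(b_k(p_k(\phi)),p_k(\phi))$.

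First I would pin down the operator $A_k$. Fixing the orthonormal basis $\{s_i\}$, the entries of $b_k(p_k(\phi))=p_k(f_k(p_k(\phi)))$ are integrals of $\langle s_i,s_j\rangle$ against the inverse Bergman density $B_k^{-1}$, where $B_k=\sum_l|s_l|^2_{k\phi}$; inserting the Bergman kernel expansion from the quantization section — whose leading term makes $B_k^{-1}$ equal to $e^{\rho}$ up to scale — identifies $A_k$, in the paper's normalization, with the Berezin--Toeplitz operator of symbol $e^{\rho}$ up to lower order. Equivalently this is the statement, already used for Theorem A, that $\log b_k(p_k(\phi))-\log p_k(\phi)$ is the quantization of the Ricci potential $\rho$, so that $A_k$ quantizes $e^{\rho}$. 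Since $F$ is continuous on $[0,\infty)$ with $F(0)=0$, the functional calculus for Toeplitz operators then gives $F(A_k)=T_{F(e^{\rho})}+o(1)=T_{\rho e^{\rho}}+o(1)$ to leading order.

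The convergence is then delivered by a Szeg\H{o}-type trace asymptotic: the generalized eigenvalues of $A_k$ equidistribute, as $k\to\infty$, like the values of the symbol $e^{\rho}$ with respect to the normalized measure $\omega^n/\!\int_X\omega^n$, so that the correspondingly normalized trace of any continuous function of $A_k$ converges to the integral of that function of $e^{\rho}$ against $\omega^n$. Applying this to $F(x)=x\log x$ yields $S_k(p_k(\phi))=\Tr F(A_k)\to\int_X F(e^{\rho})\,\omega^n=\int_X\rho\,e^{\rho}\omega^n=S(\phi)$, which is the claim.

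The main obstacle is the spectral control needed to make these limits rigorous, and it is a genuine one because $F(x)=x\log x$ is only Lipschitz away from $0$ while the trace runs over a spectrum whose size $N_k=\dim H^0(X,-kK_X)\sim k^n$ grows: operator-norm smallness of the Toeplitz remainders does not control the accumulated trace error, which must instead be shown to be $o(N_k)$ (equivalently $o(1)$ after the paper's normalization). I would secure this in two steps. First, uniform two-sided bounds $C^{-1}\leq A_k\leq C$, which hold because $\phi$ is a fixed smooth metric and hence $e^{\rho}$ is bounded above and below; these confine the generalized spectrum to a compact subset of $(0,\infty)$ on which $F$ is Lipschitz, so that the symbol-to-eigenvalue error and the replacement $F(A_k)\mapsto T_{F(e^{\rho})}$ cost only the next order of the Bergman expansion. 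Second, the trace form of the Szeg\H{o}/equidistribution estimate — equivalently, the trace-norm version of the Toeplitz product and functional-calculus bounds — then shows that these next-order contributions are negligible after normalization, giving the stated convergence.
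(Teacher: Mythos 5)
Your proposal is correct in substance but takes a genuinely different route from the paper. The paper's proof is dynamical: it runs the quantized flow from $H_0=p_k(\phi)$ and the K\"ahler--Ricci flow from $\phi$, identifies both entropies as slopes of the $L$-functional --- $S_k(H_t)=-\frac{d}{dt}L\circ f_k(H_t)$ by Proposition \ref{S as slope along the flow} and $S(\phi_t)=-\frac{d}{dt}L(\phi_t)$ --- then uses Theorem \ref{quantized flow vs KRF} to get uniform convergence $L\circ f_k(H_t)\to L(\phi_t)$ on $[0,T)$ and an elementary convexity lemma to upgrade this to convergence of the slopes at $t=0$. You instead argue statically and spectrally: in a $p_k(\phi)$-orthonormal basis, $b_k(p_k(\phi))$ is exactly a Toeplitz-type operator whose symbol converges uniformly to $e^{\rho}$ by the first-order Bergman expansion (note the measure in $p_k\circ f_k$ is $d\mu_{\b_k(\phi)}$ rather than $d\mu_\phi$, but since $\b_k(\phi)\to\phi$ uniformly this correction is absorbed into the same expansion), and a Szeg\H{o}-type equidistribution of the generalized eigenvalues then gives $\frac{1}{N_k}\Tr F(A_k)\to V^{-1}\int_X F(e^{\rho})\,\omega_\phi^n$ with $F(x)=x\log x$, which matches the paper's normalization $S_k(H)=\frac{1}{N_k}\sum_i B_i\log B_i$ from Lemmas \ref{normalization lemma} and \ref{convex conjugate form}. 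Each route has its advantages: the paper's recycles Theorem A and the slope formula and needs no Toeplitz calculus, but it leans on convexity of $t\mapsto L\circ f_k(H_t)$ --- that is, on monotonicity of $S_k$ along the quantized flow, which the paper itself establishes only asymptotically (Proposition \ref{asymptotic monotonicity}) --- and on a derivative-convergence step at the endpoint $t=0$ that is delicate there; your route avoids the flow and these convexity issues entirely, localizes everything at the fixed metric $\phi$, and with quantitative Bergman/Szeg\H{o} inputs would even yield a rate, at the cost of importing machinery (uniform first-order Bergman asymptotics and Szeg\H{o} trace asymptotics \`a la Boutet de Monvel--Guillemin, in the spirit of \cite{His16}) that the paper does not otherwise develop. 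One simplification to your error analysis: once the trace is normalized by $N_k$, operator-norm control does suffice, since Weyl's perturbation theorem pairs the spectra of $T_{u_k}$ and $T_{e^{\rho}}$ within $\norm{u_k-e^{\rho}}_{\infty}$, and both spectra lie in a fixed window $[c,C]\subset(0,\infty)$ on which $F$ is Lipschitz, so that $\abs{\frac{1}{N_k}\Tr F(T_{u_k})-\frac{1}{N_k}\Tr F(T_{e^{\rho}})}\leq \mathrm{Lip}(F)\norm{u_k-e^{\rho}}_{\infty}$; no trace-norm refinement is needed, and the remaining Szeg\H{o} step for the fixed smooth symbol $e^{\rho}$ follows from the product asymptotics $T_g^m=T_{g^m}+O(k^{-1})$ and polynomial approximation on $[c,C]$.
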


Our third point is the relation with the optimal degeneration for a Fano manifold. 
As we will explain in subsection \ref{section: optimal degeneration}, 
the test configuration $(\cX, \cL)$ is called {\em optimal} if it minimizes 
the non-Archimedean entropy $S^\NA(\cX, \cL)$ among all other degenerations. 
According to \cite{BHJ17} and the work of Boucksom-Jonsson, 
the collection of test configurations $\cH^\NA$ indeed coincides non-Archimedean 
metrics of the Berkovich analytification of the polarization $(X, -K_X)$. 
We define the quantization $S^\NA_k(\nu)$ for each $\nu \in \cH_k^\NA$ and 
formulate the optimal non-Archimedean norm in this finite-dimensional settings.   
\begin{thmC}\label{Theorem C}
    We have the quantized version of the equality (\ref{optimal degeneration}) so that   
    \begin{equation}
    \inf_{H \in \cH_k} S_k(H) = \max_{\nu \in \cH_k^\NA} (-S_k^\NA(\nu))
    \end{equation}
    holds for any fixed $k \in \N$. 
\end{thmC}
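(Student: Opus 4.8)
The plan is to read the identity as a finite-dimensional convex-duality statement of Kempf--Ness type on the symmetric space $\cH_k \cong \GL(N,\C)/\U(N)$, where $N = \dim H^0(X,-kK_X)$, with the balancing map $b_k$ playing the role of a moment map. First I would fix the dictionary. Parametrising a point by the Hermitian endomorphism $\log H$, the geodesic ray issuing from a base point $H_0$ in a Hermitian direction is $\log H_t = \log H_0 + tA$, and the space $\cH_k^\NA$ of finite-dimensional non-Archimedean metrics is precisely the boundary at infinity of $\cH_k$, namely these directions up to positive rescaling, equivalently the $\R$-filtrations of $H^0(X,-kK_X)$. With this identification the non-Archimedean entropy is the asymptotic slope
\[
S_k^\NA(\nu) = \lim_{t\to\infty}\frac{d}{dt}\,S_k(H_t),
\]
so that $-S_k^\NA(\nu)$ is the analogue of the Mumford weight of the one-parameter degeneration $\nu$, and the theorem asserts that the variational infimum of $S_k$ is dual to the extremal weight over $\cH_k^\NA$.

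The first substantive step is to prove that $S_k$ is convex along the geodesics of $\cH_k$. Splitting $S_k(H) = \Tr\bigl[b_k(H)\log b_k(H)\bigr] - \Tr\bigl[b_k(H)\log H\bigr]$, I would derive convexity from two ingredients: the joint operator convexity of the quantum relative entropy (Lindblad--Lieb), and the geodesic convexity of the quantized energy functionals attached to $b_k$, which is the finite-dimensional shadow of Berndtsson's positivity of direct images. Once convexity is in hand the difference quotient is monotone, so the limit defining $S_k^\NA(\nu)$ exists and is a convex, positively homogeneous function of $\nu$; hence $-S_k^\NA$ is concave, which is what makes the right-hand $\max$ meaningful and attained on a compact slice of directions. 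A Laplace/large-deviations computation along $\log H_t = \log H_0 + tA$ then evaluates $S_k^\NA(\nu)$ explicitly through the weights of $A$ and the jumping numbers of the associated filtration, giving the finite-dimensional Duistermaat--Heckman data.

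With these pieces the identity splits into two inequalities. One of them is routine: it is the subgradient (Hilbert--Mumford) bound coming from monotonicity of the slope together with the first-variation formula for $S_k$, and it yields one inequality for every fixed $k$. The content is the reverse, strong-duality inequality, and I should stress that it is \emph{not} a formal consequence of convexity: a generic convex function on $\cH_k$ fails the equality (already $H \mapsto \dist(H,H_0)$ does), so the identity genuinely encodes the special structure of the entropy and is the exact finite-dimensional counterpart of the existence of an optimal destabiliser. The reverse inequality reduces to attainment: one takes a minimising sequence $H_j$ for $S_k$, and in the unstable case, where the infimum is not realised in the interior, $H_j$ must run to the boundary at infinity, and everything hinges on showing that its directions converge, on the sphere of unit Hermitian matrices, to a single optimal $\nu^*$ whose asymptotic slope realises the infimum.

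This selection step is the main obstacle. A convex function on a nonpositively curved space need not have minimising sequences that converge in direction, so I would have to establish a coercivity of $S_k$ transverse to the optimal ray, i.e.\ uniform strict convexity in directions complementary to $\nu^*$. Concretely I would decompose the direction as a component along $\nu^*$ plus a transverse part, show that the transverse second variation of $S_k$ is bounded below by a positive constant (using strict operator convexity of the relative entropy together with the nondegeneracy of $b_k$ at balanced forms), and thereby confine the minimising sequence to a bounded tube around $\R_{\geq 0}\,\nu^*$; the slope along that ray then equals the infimum and closes the gap. Although the quantized K\"ahler--Ricci flow (\ref{quantized flow 0}) is the natural dynamical candidate for selecting $\nu^*$, in the spirit of the Dervan--Sz\'ekelyhidi realisation of the optimal degeneration as the limit of the flow, the absence of perfect monotonicity for $S_k$ noted above means I would use the flow only as motivation and carry out the selection purely through the convex-geometric compactness argument, which also delivers the uniqueness of $\nu^*$.
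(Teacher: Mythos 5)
Your plan rests on two claims the paper neither makes nor needs, and both are genuine gaps. First, you redefine $S_k^\NA(\nu)$ as the asymptotic slope $\lim_{t\to\infty}\frac{d}{dt}S_k(H_t)$ of the quantized entropy itself along a geodesic ray; the paper's definition is $S_k^\NA=L^\NA\circ f_k-F_k^\NA$, and you never verify that your slope agrees with it, so your duality statement is a priori about a different functional. Second, both of your inequalities lean on geodesic convexity of $S_k$ on $\GL(N_k;\C)/\U(N_k)$, which you do not prove and which is dubious: Lieb--Lindblad joint convexity of $(A,B)\mapsto\Tr A(\log A-\log B)$ concerns linear mixtures, not the exponential geodesics $H_t$, and certainly not the composition with the nonlinear balancing map $b_k$. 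The paper conspicuously avoids any such claim --- indeed it records that $S_k$ is not even monotone along the quantized flow (Proposition \ref{asymptotic monotonicity} gives only an asymptotic bound), so the monotone-difference-quotient argument you use to make your slope definition meaningful already fails. The only convexity actually used is that of $L\circ f_k$ along geodesics, combined with the Gibbs variational (convex-conjugate) formula of Lemma \ref{convex conjugate form}: given $\nu\in\cH_k^\NA$ with weights $\lambda_i$, one chooses a representing basis simultaneously $H_0$-orthonormal, $b_k(H_0)$-orthogonal and compatible with the filtration, bounds $-L^\NA\circ f_k(\nu)$ by the initial slope $-\sum\frac{\lambda_i}{k}\frac{\norm{s_i}^2_{b_k(H_0)}}{N_k}$, and recognizes the resulting expression as one competitor in the supremum defining $S_k(H_0)$; this gives $-S_k^\NA(\nu)\leq S_k(H_0)$ with no convexity of $S_k$ whatsoever.

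For the reverse inequality your program (minimizing sequence escaping to infinity, uniform transverse strict convexity, confinement to a tube around a unique ray $\nu^*$) is exactly the hard analysis one would face without further structure, and you leave its crux --- the transverse coercivity estimate --- as an unsupported hope. The paper needs none of it, because the optimizer is produced by an explicit construction: by Proposition \ref{S as slope along the flow}, at any point $H$ of the quantized flow (\ref{quantized flow}) the flow direction itself defines a norm $\nu$ with weights $\lambda_i=-k\log\norm{s_i}^2_{b_k(H)}$ in an $H$-orthonormal, $b_k(H)$-orthogonal basis, for which $S_k(H)=-\sum\frac{\lambda_i}{k}\frac{\norm{s_i}^2_{b_k(H)}}{N_k}-\log\frac{1}{N_k}\sum e^{-\lambda_i/k}$, and by the normalization $\sum\norm{s_i}^2_{b_k(H)}=N_k$ of Lemma \ref{normalization lemma} the subtracted term $F_k^\NA(\nu)$ vanishes, so $-S_k^\NA(\nu)$ matches $S_k(H)$ on the nose; evaluating along flow times $t_j\to\infty$ closes the duality with no compactness at the boundary at infinity. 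Note also that the theorem asserts no uniqueness of the maximizer, so the uniqueness of $\nu^*$ you promise is an extra burden your method would have to carry, while the flow is used in the paper as the constructive mechanism, not merely as motivation as you suggest.
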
 

It would be interesting to compute the above 
optimal non-Archimedean norm for the manifolds with large symmetry, such as homogeneous or toric varieties. 

\section{K\"ahler-Ricci flow and the optimal degeneration}
\label{Kahler-Ricci flow and optimal degeneration}

\subsection{K\"ahler-Ricci flow}
\label{section: Kahler-Ricci flow}
Let $(X, L)$ be an complex $n$-dimensional polarized algebraic manifold. We are mainly interested in the Fano case where the polarization $L=-K_X$ is the anti-canonical line bundle. 
In this setting \textit{geometric flow} means a certain time-evolution of K\"ahler metrics $\omega=\omega(t)$  in the first Chern class $c_1(L)$. 
A typical example is the normalized K\"ahler-Ricci flow equation:  
\begin{equation}\label{KRF}
\frac{\partial}{\partial t} \omega = -\Ric{\omega} +\omega.  
\end{equation} 
From the result of \cite{Cao85}, the solution exists for all $0 \leq t <\infty$ and converges to the K\"ahler-Einstein metric if it exists. 
Using $\partial\bar{\partial}$-lemma we may rephrase the above equation in terms of the potential function.  Indeed once a reference metric $\omega_0$ is fixed one may take a function $\phi$ such that $\omega=\omega_\phi = \omega_0 +\dd\phi $. Similarly one has a function $\rho$ such that $\Ric{\omega} -\omega =\dd \rho $. We choose the normalization
\begin{equation}\label{Ricci potential}
\int_X (e^\rho-1) \omega^n =0 
\end{equation} 
so that $\rho=\rho_\omega$ is uniquely determined by $\omega$ and the equation (\ref{KRF}) is translated into 
\begin{equation}\label{KRF2}
\frac{\partial}{\partial t} \phi =- \rho. 
\end{equation} 
In the analysis of the K\"ahler-Ricci flow it is also important to rephrase the equation in terms of the associated measures. 
The first measure we have in mind is the Monge-Amp\`ere measure $\omega^n=\omega_\phi^n$ of $\phi$. 
We often divide it by the volume $V=\int_X \omega_\phi^n $ to consider the probability measure $V^{-1} \omega_\phi^n$. 
In order to encode the information of the Ricci curvature one needs another probability measure 
\begin{equation}\label{canonical measure}
d\mu_\phi = \frac{e^{-\phi} d\mu_0}{\int_X e^{-\phi}d\mu_0}, \ \ \ d\mu_0 = e^{\rho_0}\omega_0^n,  
\end{equation} 
which we call \textit{the canonical measure}. 
The equation (\ref{KRF}) now can be written down to the second order partial differential equation 
\begin{equation}\label{KRF3}
\frac{\partial}{\partial t} \phi = -\log\bigg[\frac{d\mu_\phi}{V^{-1}\omega_\phi^n}\bigg]. 
\end{equation} 

Following \cite{He16}, we introduce the entropy functional 
\begin{equation}\label{entropy}
S(\omega) = \int_X \rho e^\rho \omega^n  = \int_X \log\bigg[\frac{d\mu_\phi}{V^{-1}\omega_\phi^n}\bigg] \mu_\phi  
\end{equation}
which is actually the relative entropy $\Ent(\mu \vert \nu) = \int \log\big[ \frac{d\mu}{d\nu}\big] d\mu$ defined for two probability measures $\mu, \nu$. 
By the definition $S(\omega)$ is non-negative. One can also check that the entropy is non-increasing along the K\"ahler Ricci flow (\cite{Pali08, PSSW09}) and that $S(\omega)=0$ if and only if $\omega$ is K\"ahler-Einstein. 
From the standard probability theory we may express the entropy as the convex conjugate form 
\begin{equation}\label{convex conjugate form0}
S(\omega) =\sup_f \bigg[ \int_X f d\mu_\phi - \log \frac{1}{V}\int_X e^f \omega_\phi^n \bigg], 
\end{equation}
where $f$ runs through arbitrary real-valued continuous functions. 
The above expression of the entropy is closely related to 
\textit{the large deviation principle} which tells us that 
the rate function measuring the rarity of atypical phenomenon is 
given in the convex conjugate form of the moment generating function. 
For the exposition and further development of this theme, 
we refer Berman's work \cite{Berm13b}, \cite{Berm18}, \cite{Berm20}.

\subsection{Space of K\"ahler metrics}
\label{section: space of Kahler metrics}  
Once a reference metric $\omega_0 \in c_1(L)$ is fixed the space of K\"ahler metric is identified with the collection of potentials 
\begin{equation}
\cH = \cH(X, \omega_0)
= \bigg\{ \phi \in C^\infty(X; \R): \omega_0+\dd \phi >0 \bigg\}. 
\end{equation} 
If one take a fiber metric $h_0$ of $L$ such that 
the Chern curvature is $\omega_0$, any other fiber metric can be 
written in a form $h=h_0e^{-\phi}$. 
If there is no fear of confusing, 
it is convenient to simply denote such a fiber metric by $e^{-\phi}$. 
Thus the space $\cH$ is also identified with the collection of 
all fiber metrics with positive curvature.

The tangent space at $\phi \in \cH$ is naturally identified with $C^\infty(X; \R)$ and is equipped with the canonical but non-trivial Riemannian metric 

\begin{equation}
u \mapsto \frac{1}{V}\int_X u^2 \omega_\phi^n. 
\end{equation} 
Indeed the geodesic curvature of a given curve $\phi_t $ $(a \leq t \leq b)$ is computed as 
\begin{equation}
c(\phi) = \ddot{\phi} - \abs{\bar{\partial} \dot{\phi} }^2. 
\end{equation} 
Let us take the annulus $A = \{ \tau \in \C: e^{-b} \leq \abs{\tau} \leq e^{-a}\}$. 
The remarkable idea of \cite{Sem92} is that the curve $\phi_t$ should be identified with a $\S^1$-invariant function $\Phi$ on the product space $A \times X$ in the manner 
\begin{equation}
\Phi(\tau, z) = \phi_{-\log\abs{\tau}}(z).   
\end{equation} 
In terms of $\Phi$, geodesic equation for $\phi_t$ is transerated into the degenerate Monge-Amp\`ere equation 
\begin{equation}\label{degenerate MA equation}
(p_2^*\omega_0 + \dd \Phi)^{n+1} \equiv 0,  
\end{equation} 
where the differential operators are taken for $(n+1)$-complex variables $(\tau, z)$. 

In general the geodesic connecting given endpoints
 $\phi_a, \phi_b$ is not necessarily contained in $\cH$ and we need to consider the weak solution of (\ref{degenerate MA equation}). 
Singular metrics are also indispensable to construct the completion of the space $\cH$ where one takes a convergent subsequence of the minimizing sequence of the energy functional. 
See \cite{BBJ15} for the detail. 

The geodesity (\ref{degenerate MA equation}) 
is further equivalent to the affiness of the Monge-Amp\`ere energy 
\begin{equation}\label{MA energy}
    E(\phi)
     = \frac{1}{(n+1)V} \sum_{i=0}^n \int_X \phi \omega_0^{n-i}\wedge \omega_\phi^i, 
    \end{equation}  
which is designed to have the derivative $(d_\phi E)(u) = \frac{1}{V}\int_X u \omega_\phi^n$. 
It is also possible to define $E(\phi) \in [-\infty, \infty)$ for singular metrics. 
A metric with the property $E(\phi) >-\infty$ is called {\em finite energy} 
and enjoys many nice properties in the pluripotential theory. 
See \cite{GZ17} for the exposition. 

\subsection{Optimal degeneration}
\label{section: optimal degeneration}

The famous Yau-Tian-Donaldson conjecture states that a polarized manifold $(X, L)$ admits a constant scalar curvature K\"ahler metric in $c_1(L)$ if and only if it is K-polystable. 
To ensure the existence of such a standard metric many authors propose much more stronger stability condition but we do not enter the topic here. 
K-stability is examined by certain degenerations of $(X, L)$. 

\begin{dfn}[\cite{Don02}. See also \cite{BHJ17} for the updated terminology.]
A flat family of polarized schemes $\pi \colon (\cX, \cL) \to \P^1$ endowed with a torus action $\lambda \colon \C^* \to \Aut(\cX, \cL)$ is called \textit{test configuration} of $(X, L)$, if it satisfies the following conditions. 
\begin{itemize}
\item[$(1)$]The projection morphism $\pi$ is equivariant with respect to $\lambda$ and the natural $\C^*$-action to $\P^1$. 
\item[$(2)$]The family is equivariantly isotrivial outside $0 \in \P^1$. That is, over the affine line $\A^1$ one has $(\cX_{\A^1}, \cL_{\A^1}) \simeq (\C \times X, p_2^*L)$, where $p_2 \colon \C \times X \to X$ is the second projection. We encode the isomorphism into the datum of the test configuration. 
\item[$(3)$]The total space $\cX$ is normal. 
\end{itemize}
\end{dfn}

The original $(X, L)$ is naturally identified with the fiber $(\cX_1, \cL_1)$ over $1 \in \C$. 
For each $k \in \N$ the induced $\C^*$-action to $H^0(\cX_0, k\cL_0)$ produces the eigenvalues 
$\lambda_1, \lambda_2, \dots, \lambda_{N_k} \in \R$. Distribution of these eigenvalues in the limit $k \to \infty$ defines the Duistermaat-Heckman measure 
\begin{equation}
\DH(\cX, \cL) = \lim_{k \to \infty} \frac{1}{N_k}\sum_{i=1}^{N_k} \delta_{\frac{\lambda_i}{k}} 
\end{equation} 
associated with the test configuration. 
As $k \to \infty$, this invariant governs the leading term coefficient of the equivariant Riemann-Roch theorem on the central fiber and hence determines the leading term of the usual Riemann-Roch theorem on the total space $\cX$. 
Indeed the first moment of the Duistermaat-Heckman measure is equivalent to the self-intersection number of $\cL$: 
\begin{equation}
E^\NA(\cX, \cL) :=\frac{\cL^{n+1}}{(n+1)V} = \hat{\lambda}:= \int_{\lambda \in \R} \lambda \DH(\cX, \cL). 
\end{equation}
We call the second moment 
\begin{equation}
\norm{(\cX, \cL)}_2 = \bigg[ \int_{\lambda \in \R} (\lambda -\hat{\lambda})^2 \DH(\cX, \cL) \bigg]^\frac{1}{2}
\end{equation}
norm of the test configuration. 
If one takes a fiber metric $e^{-\Phi}$ of $\cL$, 
the information of the test configuration is encoded 
into the associated ray 
\begin{equation}
\phi_t(z) = \Phi(\lambda(e^{-t})z) 
\end{equation} 
on the space of K\"ahler metrics $\cH$. 
If such $\phi_t$ is a weak geodesic ray on $\cH$, 
we say that it is associated with the test configuration. 
For the associated weak geodesic ray, 
the $p$-th moment is transrated as 
\begin{equation}\label{DH formula}
   \int_{\lambda \in\R} \lambda^p\DH(\cX, \cL)
    = \frac{1}{V}\int_X (\dot{\phi})^p \omega^n. 
\end{equation} 
See \cite{His16, BHJ17} for the detail. 

A polarized manifold is called K-semistable if the essentially sub-leading term coefficient 

\begin{equation}\label{NA K-energy}
M^\NA(\cX, \cL) = V^{-1} K_{\cX/\P^1}^{\log} \cL^n  + \hat{S}E^\NA(\cX, \cL) 
\end{equation}
is semipositive for all $(\cX, \cL)$. 
In this article we follow the notation of \cite{BHJ17} 
in order to regard the right-hand side as the non-Archimedean version 
of the K-energy functional (and of its integral-by-parts formula \cite{Chen00b}) 

\begin{equation}\label{K-energy}
M(\phi) =  \Ent(V^{-1}\omega_\phi^n \vert V^{-1}\omega_0^n) +R(\phi) + \hat{S} E(\phi). 
\end{equation}

Here $\hat{S}$ is the average scalar curvature and $E \colon \cH \to \R$ is the Monge-Amp\`ere energy (\ref{MA energy}). 
For more precise definition of these energies and the non-Archimedean point of view we refer \cite{BHJ17, BHJ19}. At any rate we do not study the K-energy in detail here. In relation with the K\"ahler Ricci flow we prefer \textit{the non-Archimedean entropy}. 

\begin{dfn}[\cite{Berm16}, \cite{DS17}]\label{definition of S-stability}
Assume for simplicity that $\cX$ has at worst Gorenstein singularities and that the central fiber $\cX_0$ is a reduced scheme. 
We define the non-Archimedean entropy of the test configuration $(\cX, \cL)$ as 

\begin{equation}\label{NA entropy}
S^\NA(\cX, \cL) = \deg \pi_*(K_{\cX/\P^1} +\cL) + \log \int_{\lambda \in \R} e^{-\lambda} \DH(\cX, \cL).  
\end{equation}
We say $(X, L)$ is \textit{S-semistable} if $S^\NA(\cX, \cL) \geq 0 $ for all such test configurations with mild singularities. 
\end{dfn} 

Following \cite{Berm16}, one can extend the first term $L^\NA(\cX, \cL) :=\deg \pi_*(K_{\cX/\P^1} + \cL)$ to arbitrary test configurations using the log canonical threshold. 
On the other hand, the discussion in \cite{LX14} reduces the semistability to testing the sign of $S^\NA(\cX, \cL)$ for such $(\cX, \cL)$ with mild singularities. 
We see that (\ref{NA entropy}) is comparable with the definition of entropy (\ref{entropy}). 
In both definitions the first term encodes the information of the canonical divisor 
and the second term is related to the moment generating function. 
This is justified by the main result of \cite{Berm16}, 
which states that the non-Archimedean L-functional gives the 
slope of the (Archimedean) L-functional 
\begin{equation}
L(\phi) := -\log{\frac{1}{V}\int_X e^{-\phi} d\mu_0} 
\end{equation} 
along any weak geodesic ray on $\cH$, 
associated with the test configuration. 
Combined with \ref{DH formula} it implies 
\begin{equation}\label{slope formula}
    S^\NA(\cX, \cL)
    = 
    \lim_{t \to \infty} \bigg(
    \frac{d}{dt}L(\phi_t) 
    +\log{\frac{1}{V}\int_X e^{-\dot{\phi}}\omega_{\phi_t}^n}
    \bigg). 
    \end{equation}
On the other hand, a direct computation shows that 
along the K\"ahler-Ricci flow $\phi_t$ one has 
\begin{equation}
S(\phi_t) 
 = -\frac{d}{dt}L(\phi_t). 
\end{equation} 
If we keep the normalization (\ref{Ricci potential}) in mind, it indicates that the tangent $\dot{\phi}$ of the K\"ahler-Ricci flow 
attains the supremum in (\ref{convex conjugate form0}).  
We can now state what is the optimal degeneration of a Fano manifold. 
The proof of the following result employs  
either Cheeger-Colding convergence theory of Riemannian manifolds 
or Birkar's boundedness of complements. 

\begin{thm}[\cite{CSW15}, \cite{DS17}, \cite{HL20}, and \cite{BLXZ21}.]
For an arbitrary Fano manifold one has 
\begin{equation}\label{optimal degeneration}
\inf_{\omega} S(\omega)  = \max_{(\cX, \cL)} (-S^\NA(\cX, \cL)). 
\end{equation} 
There exists a unique test configuration $(\cX, \cL)$ which achieves the equality and it coincides with the first one of the 2-step degenerations generated from the Gromov-Hausdorff limit of the K\"ahler-Ricci flow $(X, \omega(t))$. 
\end{thm}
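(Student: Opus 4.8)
The plan is to read (\ref{optimal degeneration}) as a convex duality between the primal problem of minimizing the entropy $S$ over $\cH$ and the dual problem of maximizing $-S^\NA$ over all test configurations, and then to establish separately the existence, the uniqueness, and the geometric identification of the maximizer. First I would isolate the soft inequality $\inf_\omega S(\omega)\geq\max_{(\cX,\cL)}(-S^\NA(\cX,\cL))$, which ought to follow from the variational material already assembled above. Fixing a test configuration $(\cX,\cL)$, an arbitrary $\omega=\omega_\phi$, and the associated weak geodesic ray $\phi_t$ with $\phi_0=\phi$, I insert the competitor $f=-\dot\phi_0$ into the convex-conjugate form (\ref{convex conjugate form0}) to obtain
\[
S(\phi)\;\geq\;-\frac{d}{dt}\Big|_{t=0}L(\phi_t)\;-\;\log\frac{1}{V}\int_X e^{-\dot\phi_0}\,\omega_\phi^n .
\]
Because the quantity $\frac{d}{dt}L(\phi_t)+\log\frac{1}{V}\int_X e^{-\dot\phi_t}\omega_{\phi_t}^n$ is non-decreasing along the ray --- by convexity of the $L$-functional and by (\ref{DH formula}) --- its value at $t=0$ is dominated by its limit, which by the slope formula (\ref{slope formula}) of \cite{Berm16} equals $S^\NA(\cX,\cL)$; hence the right-hand side above is at least $-S^\NA(\cX,\cL)$. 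Taking $\inf_\omega$ and then $\sup_{(\cX,\cL)}$ yields weak duality.

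The reverse inequality is the substantial one, and it is here that deep analytic input is unavoidable. I would run the normalized K\"ahler-Ricci flow (\ref{KRF}) from an arbitrary initial metric; by the monotonicity of \cite{He16} the entropy $S(\phi_t)$ is non-increasing and hence converges, and one must verify that its limit is the \emph{global} infimum $\inf_\omega S(\omega)$ rather than merely a stationary value. The Hamilton-Tian picture worked out in \cite{CSW15} then guarantees that $(X,\omega(t))$ converges in the Gromov-Hausdorff sense to a K\"ahler-Ricci soliton $(X_\infty,\omega_\infty)$ on a $\Q$-Fano variety carrying a torus action, and --- the decisive point --- that this analytic limit is \emph{algebraic}, arising from a canonical two-step degeneration $X\rightsquigarrow W\rightsquigarrow X_\infty$ whose first step is a genuine test configuration $(\cX^\ast,\cL^\ast)$.

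The third step is to show that the flow is asymptotic, in the sense measured by (\ref{slope formula}), to the weak geodesic ray attached to $(\cX^\ast,\cL^\ast)$, so that
\[
\inf_\omega S(\omega)=\lim_{t\to\infty}S(\phi_t)=-S^\NA(\cX^\ast,\cL^\ast)\leq\max_{(\cX,\cL)}(-S^\NA(\cX,\cL)).
\]
Combined with the weak duality of the first paragraph, this forces equality in (\ref{optimal degeneration}) and exhibits $(\cX^\ast,\cL^\ast)$ as a maximizer, so the supremum is in fact attained. Matching the slope of the $L$-functional along the flow with the non-Archimedean entropy (\ref{NA entropy}) of the degeneration is the technical heart of this step: it requires that the flow not oscillate but lock onto the soliton direction, which is precisely what the convergence theory of \cite{CSW15, DS17} provides.

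Finally, for uniqueness and for the independence of $(\cX^\ast,\cL^\ast)$ from the chosen initial metric or convergent subsequence, I would pass to the purely algebraic minimization of $S^\NA$ over the valuations (equivalently, the filtrations) of the anticanonical ring. Following \cite{HL20, BLXZ21}, this minimized functional is strictly convex on the relevant cone, so its minimizer --- the optimal soliton degeneration --- is unique, and the uniqueness propagates back to pin down the flow limit unambiguously. The main obstacle is thus twofold and essentially external: on the analytic side, the Hamilton-Tian conjecture together with the algebraicity of the Gromov-Hausdorff limit via the partial $C^0$ estimate; on the algebraic side, the existence and uniqueness of the minimizing valuation. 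I would import these as the key inputs from \cite{CSW15, DS17, HL20, BLXZ21} rather than attempt to re-derive them.
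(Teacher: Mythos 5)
This theorem is quoted, not proved, in the paper: it is imported wholesale from \cite{CSW15}, \cite{DS17}, \cite{HL20} and \cite{BLXZ21}, with only the remark that the proof employs either Cheeger--Colding convergence theory or Birkar's boundedness of complements, so there is no internal proof to compare against. Your outline is a correct reconstruction of exactly how those references assemble the result --- the weak-duality half, using the competitor $f=-\dot{\phi}_0$ in (\ref{convex conjugate form0}), Berndtsson convexity of $L$ along the ray, constancy in $t$ of the law of $\dot{\phi}_t$ via (\ref{DH formula}), and the slope formula (\ref{slope formula}), is self-contained and sound (modulo the routine remark that $\dot{\phi}_0$ is only bounded, not continuous, so one approximates in the sup), while the attainment along the flow, the identification with the first step of the two-step degeneration, and the uniqueness of the minimizing valuation are precisely the deep external inputs that you correctly flag as imported rather than re-derived.
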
 

\begin{rem}
There are various other geometric flows seeking for the standard metric. 
For example, in \cite{CHT17} the authors introduced \textit{the inverse Monge-Amp\`ere flow}

\begin{equation}
\frac{\partial}{\partial t} \phi = 1-e^\rho 
\end{equation} 

for a Fano manifold. This is the gradient flow of so-called D-energy functional 
\begin{equation}
D(\phi) = L(\phi) -E(\phi) = -\log \frac{1}{V} \int_X e^{-\phi}d\mu_0  -E(\phi) 
\end{equation}
and one has the non-Archimedean analogue  

\begin{equation}\label{NA D-energy}
D^\NA(\cX, \cL) = \deg \pi_*(K_{\cX/\P^1} + \cL) - \int_{\lambda \in \R} \lambda \DH(\cX, \cL).  
\end{equation}
The result of \cite{Berm16} guarantess that $D^\NA(\cX, \cL)$ gives the 
slope of the D-energy along the 
associated weak geodesic ray. 
Based on the result, a Fano manifold is called D-semistable 
if $D^\NA(\cX, \cL) \geq 0$ for all test configurations.
By the cerebrated work \cite{CDS15} 
together with the main result of \cite{LX14}, 
a Fano manifolds admits a K\"ahler-Einstein metric if and only if 
it is D-polystable (see also \cite{BBJ15}). 
In this case, 
the inverse Monge-Amp\`ere flow converges to the K\"ahler-Einstein metric
modulo the action of the automorphism group. 

The difference from S-stability is subtle. 
In the slope formula (\ref{slope formula}) 
one can easily see from Jensen's inequality that 
\begin{equation}
    \log \frac{1}{V}\int_X e^{-\dot{\phi}} \omega_\phi^n 
    \geq -\frac{1}{V}\int_X \dot{\phi} \omega_\phi^n 
\end{equation} 
holds hence S-stability is weaker than D-stability. 
In \cite{CHT17} we showed that for any toric Fano manifold there exists a test configuration achieving the equality 

\begin{equation}
\inf_{\omega} \bigg[ \int_X (e^\rho-1)^2 \omega^n \bigg]^\frac{1}{2} 
 = \max_{(\cX, \cL)} \frac{-D^\NA(\cX, \cL)}{\norm{(\cX, \cL)}_2},  
\end{equation}

which should be the optimal degeneration with respect to the invariant $D^\NA$. 
In the toric case the central fiber of the $D$-optimal degeneration has two irreducible component while the $S$-optimal degeneration is simply a product space. 
This is in contrast to the stable case where the K\"ahler-Einstein metric is characterized by both functionals $S$ and $D$. 

\end{rem} 
\section{Quantization}
\label{Quantization}

\subsection{Anti-canonical settings of Geometric quantization}
\label{sec:3-1}
Let us briefly recall the framework of geometric quantization 
for the anti-canonical polarization $(X, L)$ with $L=-K_X$.

Relationship between the geometric quantization and the K\"ahler-Einstein problem 
dates back to the pioneering work of Yau, Tian, and Donaldson. 
In the Fano case one may utilize somewhat special formulation and 
we follow \cite{Berm13}, \cite{BBEGZ11} for the idea. 

For each $k \in \N $ we denote by $\cH_k$ the collection of all positive definite 
Hermitian inner products on the vector space $H^0(X, kL)$. 
From the polar decomposition for such matrices 
one obtains the homogeneous space expression $\cH_k \simeq \GL(N_k; \C)/U(N_k)$. 
The key in the geometric quantization is the map 

\begin{equation}
f_k \colon \cH_k \to \cH 
\end{equation}

which takes an orthonormal basis $s_1, s_2, \dots, s_{N_k}$ 
of $H \in \cH_k$ and then sends $H \in \cH_k$ to the metric of Fubini-Study type 
\begin{equation}
f_k(H) =\frac{1}{k} \log \frac{1}{N_k}\sum_{i=1}^{N_k} \abs{s_i}^2.  
\end{equation} 
It is easy to check that the right-hand side is independent of 
the choice of an orthonormal basis. 

One also has the converse direction map $p_k\colon \cH \to \cH_k$ taking 
of $\phi \in \cH$ the $L^2$-inner product 
\begin{equation}
p_k(\phi) = \int_X \abs{\cdot}^2 e^{-k\phi} d\mu_\phi. 
\end{equation}
The reference fiber metric $\abs{\cdot}^2=h_0(\cdot, \cdot)$ of the line bundle $L$ in the above 
is taken such that its Chern curvature equals to the fixed K\"ahler metric $\omega_0$. 
It is abuse of notation, however, 
we will simply write $h_0(s, s')$ as $s\overline{s'}$. 
See also the arrangement in subsection \ref{section: space of Kahler metrics}. 
There should be several choices of the measure, 
which yields essentially different definitions of $p_k$. 
The ``anti-canonical" setting means that we chose the canonical measure  $d\mu_\phi$ 
defined in (\ref{canonical measure}). 

We set $b_k =p_k \circ f_k$ and $\b_k =f_k \circ p_k$. In general $\b_k(\phi) \neq \phi$ but for any fixed $\phi$ the $C^\infty$-convergence 
\begin{equation}
\lim_{k \to \infty} \b_k(\phi) =\phi 
\end{equation}
holds. This is a very special case of the famous Bergman kernel asymptotic expansion. 
In this sense at least pointwisely the finite-dimensional $\cH_k$ approximates $\cH$. 
The homogeneous space $\cH_k$ is naturally endowed with the invariant Riemannian metric 
and the two spaces $\cH_k$ and $\cH$ shares analogous geometric structure 
including the expression for geodesics and the curvature tensors. 
See \cite{Guedj12} for the exposition. 
As a consequence of choosing $\mu_\phi$ as the probability measure, 
we have the following lemma. 

\begin{lem}\label{normalization lemma}
Denote by $\norm{s}_{b_k(H)}$ the norm of a section $s \in H^0(X, kL)$ 
with respect to the Hermitian form $b_k(H)$. 
For any choice of the orthonormal basis $s_i$ one has 
\begin{equation}
\sum \norm{s_i}^2_{b_k(H)} = N_k. 
\end{equation}
\end{lem}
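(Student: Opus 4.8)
The plan is to unwind the definitions of $f_k$ and $p_k$, compute the sum directly, and watch the desired identity drop out from the cancellation of the Bergman kernel against the Fubini--Study weight, together with the fact that the canonical measure is a probability measure. Fix $H \in \cH_k$ with an $H$-orthonormal basis $s_1, \dots, s_{N_k}$, and abbreviate $\psi = f_k(H)$. By the very definition of $f_k$,
\begin{equation}
    e^{-k\psi} = \Bigg( \frac{1}{N_k}\sum_{j=1}^{N_k}\abs{s_j}^2 \Bigg)^{-1} = \frac{N_k}{\sum_{j=1}^{N_k}\abs{s_j}^2}.
\end{equation}
Since $b_k(H) = p_k(\psi)$ is by construction the $L^2$-inner product $\int_X \abs{\cdot}^2 e^{-k\psi}\,d\mu_\psi$, the norm of each basis vector is $\norm{s_i}_{b_k(H)}^2 = \int_X \abs{s_i}^2 e^{-k\psi}\,d\mu_\psi$.

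Summing over $i$ (a finite sum, so the interchange with the integral is immediate) and substituting the expression for $e^{-k\psi}$ gives
\begin{equation}
    \sum_{i=1}^{N_k}\norm{s_i}_{b_k(H)}^2 = \int_X \Bigg(\sum_{i=1}^{N_k}\abs{s_i}^2\Bigg)\frac{N_k}{\sum_{j=1}^{N_k}\abs{s_j}^2}\,d\mu_\psi = N_k\int_X d\mu_\psi.
\end{equation}
The crucial point is that the numerator $\sum_i \abs{s_i}^2$ and the Bergman kernel in the denominator are literally the same function — this is also precisely why $f_k$ is independent of the orthonormal basis — so their ratio is the constant $1$ and the integrand collapses to $N_k$. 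Because $d\mu_\psi$ is the canonical measure (\ref{canonical measure}), normalized to be a probability measure, one has $\int_X d\mu_\psi = 1$, and the right-hand side equals $N_k$, as claimed.

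Invariantly, $\sum_i \norm{s_i}_{b_k(H)}^2 = \Tr b_k(H)$, where $b_k(H)$ is read as the positive $H$-self-adjoint operator representing the inner product $b_k(H)$ relative to $H$ (as in the definition of $S_k$); this makes manifest that the value is independent of the chosen orthonormal basis. I expect no genuine obstacle here: the only thing requiring care is the bookkeeping of conventions, namely that the reference weight $\abs{\cdot}^2 e^{-k\psi}$ entering $p_k$ matches the Fubini--Study definition of $f_k$ so that the cancellation is exact, and that the normalization (\ref{Ricci potential})--(\ref{canonical measure}) forces $\int_X d\mu_\psi = 1$ rather than producing an arbitrary volume factor. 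This identity is the finite-dimensional counterpart of $\int_X d\mu_\phi = 1$ and is what makes $\log b_k(H)$ a correctly normalized quantization of the Ricci potential in the sequel.
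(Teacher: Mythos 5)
Your proof is correct and follows essentially the same route as the paper's: both unwind $b_k = p_k \circ f_k$, use the identity $\sum_i \abs{s_i}^2 = N_k e^{k f_k(H)}$ to cancel the Fubini--Study weight $e^{-k f_k(H)}$ inside the integral, and conclude from the fact that the canonical measure $d\mu_{f_k(H)}$ is a probability measure. Your closing remark identifying the sum with $\Tr b_k(H)$ relative to $H$, which makes the basis-independence manifest, is a harmless and clarifying addition not present in the paper's proof.
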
 

\begin{proof}
It is straightforward to compute  
\begin{eqnarray}
\sum \norm{s_i}^2_{b_k(H)}  &=& \sum \int_X \abs{s_i}^2 e^{-kf_k(H)} d\mu_{f_k(H)}  \\
&=& \int N_k e^{kf_k(H)} e^{-kf_k(H)} d\mu_{f_k(H)} \\
&=& N_k \int d\mu_{f_k(H)} =N_k. 
\end{eqnarray}
\end{proof} 

According to the pioneering work \cite{Don01}, 
the following provides the ``quantization" of K\"ahler-Einstein metric, 
or more generally, of constant scalar-curvature K\"ahler metric. 
\begin{dfn}
The Hermitian form $H$ is called \textit{balanced} if $b_k(H) = H$ holds. 
\end{dfn} 
As a result of simple unitary diagonalization it is always possible to take a basis $s_i$ 
which is at the same time $H$-orthonormal and $b_k(H)$-orthogonal. 
In this case, the balanced condition is equivalent to $\norm{s_i}_{b_k(H)}=1$. 

In the anti-canonical setting 
\cite{BBEGZ11} intoduced the quantization of the energy functionals 
which we explained in section \ref{Kahler-Ricci flow and optimal degeneration}. 
Once we fix the reference $H_0 \in \cH_k$ the finite-dimensional Monge-Amp\`ere energy 
of a Hermitian form $H \in \cH_k$ is 
\begin{equation}
E_k(H) = -\frac{1}{kN_k}\log \sideset{}{_{H_0}}\det{H}
\end{equation}
and the $D$-energy is quantized to be 
\begin{equation}
D_k= L \circ f_k -E_k. 
\end{equation}
The critical point of the $D_k$-energy is precisely the balanced Hermitian form. 
Moreover, if there exists a unique K\"ahler-Einstein metric $\phi_{\KE}$ 
the image of the $k$-balanced Hermitian form $f_k(H)$ 
converges to $\phi_{\KE}$. 
See \cite{BBGZ13, BBEGZ11} for the proof.

\subsection{Quantization of the K\"ahler-Ricci flow}
\label{Quantization of the Kahler-Ricci flow}

Since $b_k(H)$ is positive Hermitian, one can takes the logarithm matrix
which defines a (not necessarily potitive) Hermitian form.  
Our basic idea is to consider the logarithm $\log{b_k(H)}$
as a counterpart of the Ricci potential. 
 
\begin{lem}\label{L-continuity}
    There exists a positive constant $L$ (independent of $k$) such that  
    \begin{equation}
        \norm{b_k(H)}_{C^2} \leq L \norm{H}_{C^0}
    \end{equation}
    holds for any $H \in \cH_k$. 
    Here we take the norms in the manner of Hilbert-Schmidt and 
    the derivatives in the normal coordinate of the homogeneous space. 
    \end{lem}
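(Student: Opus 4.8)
The plan is to work in the exponential (normal) chart of the nonpositively curved symmetric space $\cH_k \simeq \GL(N_k;\C)/\U(N_k)$ centered at the reference $H_0$, identifying the tangent space with the $H_0$-Hermitian endomorphisms of $H^0(X,kL)$ equipped with the \emph{normalized} Hilbert--Schmidt product $\langle A,B\rangle = N_k^{-1}\Tr(AB)$; this normalization is forced on us by Lemma \ref{normalization lemma} and is exactly what will render the eventual bounds independent of $k$. With these conventions $\|H\|_{C^0}$ is the geodesic distance $\|\log_{H_0}H\|$, and proving the lemma amounts to bounding the value $b_k(H)$ together with its first and second covariant derivatives in this chart. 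The computational engine is the reproducing kernel $K^H_x \in H^0(X,kL)$ of $H$, its Bergman density $\rho_H(x)=\langle K^H_x,K^H_x\rangle_H$, and the Berezin symbol $\sigma_H(A)(x)=\langle A K^H_x,K^H_x\rangle_H/\rho_H(x)$ of an endomorphism $A$. The single formula I really need is the first variation $\frac{d}{dt}\big|_{0} f_k(H e^{tA}) = -\tfrac1k\,\sigma_H(A)$, where the prefactor $\tfrac1k$ is the whole point.

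Next I would differentiate the integral representation $b_k(H)(s,s')=\int_X s\overline{s'}\,e^{-kf_k(H)}\,d\mu_{f_k(H)}$. Differentiating the factor $e^{-kf_k(H)}$ produces $-k\,\dot f_k = \sigma_H(A)$, so the spurious $k$ cancels exactly and the differential $d_H b_k(A)$ is, up to a genuinely lower-order ($O(1/k)$) contribution coming from the variation of the canonical measure $d\mu_{f_k(H)}$, the Toeplitz operator with symbol $\sigma_H(A)$ against the measure $e^{-kf_k(H)}d\mu_{f_k(H)}$. The composition ``Toeplitz $\circ$ Berezin'' is the operator-level Berezin transform; because Lemma \ref{normalization lemma} tells us this measure has total mass $N_k$ and makes the transform unital and positive (a Markov operator), it contracts both the operator norm and the normalized Hilbert--Schmidt norm. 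This yields a $k$-uniform bound $\|d_H b_k\| \le C$, and integrating it along the geodesic from $H_0$ to $H$ (of length $\|H\|_{C^0}$) controls the value, delivering the $C^0$ and $C^1$ parts of the asserted estimate.

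The second derivative is where the work lies. Differentiating once more, the factor $e^{-kf_k}$ contributes $k^2(\dot f_k)^2=\sigma_H(A)^2$ together with $-k\ddot f_k$, where the second variation $\ddot f_k$ is a Berezin ``variance'' of the form $\tfrac1k\big(\sigma_H(A^2)-\sigma_H(A)^{2}\big)$; once again every power of $k$ cancels against a derivative of $f_k$, and the surviving integrand is a bounded combination of Berezin symbols of $A$ and $A^2$ integrated against the same probability measure. To these one must add the Christoffel and curvature terms of $\cH_k$ that convert the naive second derivative in the chart into the covariant Hessian; in normal coordinates these are controlled by the sectional curvature of $\GL(N_k)/\U(N_k)$ and contribute only products of the already-bounded first derivatives. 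Assembling these terms gives the $C^2$ bound. I expect the main obstacle to be precisely the bookkeeping in this last step: verifying that the cancellation of $k$-powers persists to second order for both the $e^{-kf_k}$ factor and the measure factor, and then bounding the resulting Berezin-variance and curvature contributions simultaneously and uniformly in $k$, with dependence on $H$ no worse than linear in $\|H\|_{C^0}$.
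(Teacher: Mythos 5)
Your proposal is correct and takes essentially the same route as the paper's proof: both differentiate the integral representation $b_k(H)(s_i,s_j)=\int_X h_0(s_i,s_j)\,e^{-kf_k(H)}\,d\mu_{f_k(H)}$ along geodesics in the normal chart and exploit the exact cancellation between the factor $k$ in the exponent and the $1/k$ in $\frac{d}{dt}f_k(H_t)$, closing the estimate with the normalization of Lemma \ref{normalization lemma}. Your Berezin symbol and its second-order variance are precisely the paper's explicit quantities $\sum_i\lambda_i\abs{s_i}^2/\sum_i\abs{s_i}^2$ and its squared counterpart, and since both arguments differentiate along geodesics through the center of the normal chart, the Christoffel and curvature corrections you flag vanish there and require no separate treatment.
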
 
\begin{proof}
    We starts from the expression 
\begin{equation}
    b_k(H)(s_i, s_j) =\int_X h_0(s_i, s_j)e^{-kf_k(H)}d\mu_{f_k(H)}
\end{equation}
 for fixed reference fiber metric $h_0$ and basis $s_i$. 
 We will differentialte it along the fixed geodesic $H_t$. 
 By the standard diagonalization argument there exists 
 $\lambda_1,\lambda_2, \dots, \lambda_{N_k}$ such that 
$e^{\lambda_i t }s_i$ gives an orthonormal basis for $H_t$. 
Adjusting the speed we may assume $\sum \lambda_i^2 \leq 1$. 
It is straightforward to see that 
\begin{align}
    &\frac{d}{dt}\bigg\vert_{t=0}b_k(H)(s_i, s_j) 
    =\frac{d}{dt}\bigg\vert_{t=0}\bigg\{\frac{\int_X h_0(s_i, s_j)e^{-(k+1)f_k(H)}d\mu_0}{\int_X e^{-f_k(H)}d\mu_0}
      \bigg\} \\
    &=\int_X h_0(s_i, s_j) \frac{k+1}{k} 
    \frac{\sum \lambda_i \abs{s_i}^2 }{\sum \abs{s_i}^2} e^{-kf_k(H)} 
    d\mu_{f_k(H)}
    +\int_X h_0(s_i, s_j) e^{-kf_k(H)} d\mu_{f_k(H)} 
    \int_X \frac{\sum \lambda_i \abs{s_i}^2 }{k\sum \abs{s_i}^2} d\mu_{f_k(H)}
\end{align}
so the left-hand side is bounded by 
$3b_k(H)(s_i, s_j)$. 
Similarly, the second derivative at $t=0$ is bounded by 
$9b_k(H)(s_i, s_j)$. 
We conclude the proof since $\norm{{b_k(H)}}_{C^0} = \norm{H}_{C^0}$ by 
Corollay \ref{normalization lemma}. 
\end{proof}
We now define the quantization of the K\"ahler-Ricci flow. 
\begin{dfn}
 \emph{Quantized K\"ahler-Ricci flow} is 
 the evolution of the positive Hermitian form $H=H_t$ 
 which satisfies 
\begin{equation}\label{quantized flow}
\frac{1}{k}\frac{d}{dt}\log{H} = \log{b_k(H)}-\log{H}
\end{equation} 
 for $t>0$. 
\end{dfn} 
If one identifies each tangent vector on $\cH_k=\GL(N_k: \C)/U(N_k)$ 
with a Hermitian form, 
the equation (\ref{quantized flow}) in fact defines a vector field on $\cH_k$. 
By Lemma \ref{L-continuity} 
and the completeness of the Riemannian metric space $\cH_k$,  
 the quantized flow has unique long-time solution 
 for any initial form $H_0$. 
\begin{lem}\label{orthonormal-orthogonal}
If $s_i=s_i(t)$ is a $H$-orthonormal and $b_k(H)$-orthogonal basis 
of $H^0(X, kL)$, 
there exists a skew-Hermitian matrix $X_{ij}(t)$ such that 
 \begin{equation}\label{quantized flow 2} 
    \dot{s}_i =\sum_{j} (-k\log{\norm{s_i}_{b_k(H)}^2+}X_{ij})s_j.
    \end{equation} 
\end{lem}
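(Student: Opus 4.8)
The plan is to read off the evolution of the frame by differentiating its two defining relations and then substituting the flow equation (\ref{quantized flow}). The guiding observation is that in the frame $s_i(t)$ everything is simultaneously diagonal: $H$-orthonormality makes the Gram matrix of $H$ the identity, so $\log H$ vanishes in this frame, while $b_k(H)$-orthogonality makes $b_k(H)$ the diagonal matrix $\mathrm{diag}(\norm{s_i}_{b_k(H)}^2)$, so $\log b_k(H)=\mathrm{diag}(\log\norm{s_i}_{b_k(H)}^2)$. Hence the right-hand side $\log b_k(H)-\log H$ of (\ref{quantized flow}) is the diagonal endomorphism with entries $\log\norm{s_i}_{b_k(H)}^2$. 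Moreover, because $\log H$ and $\log b_k(H)$ now commute, the usual ordering ambiguity in differentiating a matrix logarithm disappears, and $\tfrac1k\tfrac{d}{dt}\log H$ may be identified with the frozen-frame derivative of the Gram matrix of $H$, in the normalization of the invariant metric on $\cH_k$.

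Next I would write $\dot s_i=\sum_j A_{ij}(t)\,s_j$, which is legitimate since $\{s_j(t)\}$ is a basis at each fixed $t$. Differentiating $H(s_i,s_j)=\delta_{ij}$ in $t$ produces $(\partial_t H)(s_i,s_j)+A_{ij}+\overline{A_{ji}}=0$, where $\partial_t H$ denotes the derivative of the form with the frame held frozen. Substituting the value of $\partial_t H$ supplied by the flow equation, which by the first paragraph is diagonal with entries governed by $\log\norm{s_i}_{b_k(H)}^2$, pins down the Hermitian part $\tfrac12(A+A^{*})$ of the coefficient matrix to be the real diagonal matrix $-k\,\mathrm{diag}(\log\norm{s_i}_{b_k(H)}^2)$.

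Setting $X:=A+k\,\mathrm{diag}(\log\norm{s_i}_{b_k(H)}^2)$, the previous step gives $X+X^{*}=2\cdot\tfrac12(A+A^{*})+2k\,\mathrm{diag}(\log\norm{s_i}_{b_k(H)}^2)=0$, so $X$ is skew-Hermitian, and by construction $A_{ij}=-k\log\norm{s_i}_{b_k(H)}^2\,\delta_{ij}+X_{ij}$, which is exactly (\ref{quantized flow 2}). The residual freedom in $X$ has a transparent meaning: an $H$-orthonormal frame that diagonalizes $b_k(H)$ is determined only up to the unitary stabilizer of the $b_k(H)$-eigenspaces, and differentiating the second relation $b_k(H)(s_i,s_j)=\norm{s_i}_{b_k(H)}^2\delta_{ij}$ only fixes the off-diagonal entries of $X$ in terms of $\partial_t b_k(H)$ and $\tfrac{d}{dt}\norm{s_i}_{b_k(H)}^2$, leaving the imaginary diagonal of $X$ undetermined.

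The step I expect to be the main obstacle is the identification, in the first paragraph, of the intrinsic quantity $\tfrac{d}{dt}\log H$ appearing in (\ref{quantized flow}) with the frozen-frame derivative of the Gram matrix of $H$. In a generic frame this identification fails, since $\log H$ and $\dot H$ need not commute and a Duhamel integral intervenes; the whole argument therefore relies on working in the simultaneously diagonalizing frame $s_i$, where $H$, $b_k(H)$ and their logarithms are diagonal and the matrix logarithm differentiates entrywise. Granting this, the remainder is the linear-algebra bookkeeping of splitting $A$ into its Hermitian and skew-Hermitian parts, as above.
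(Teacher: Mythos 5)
Your proposal is correct and takes essentially the same route as the paper's proof: differentiate $H(s_i,s_j)=\delta_{ij}$ along the flow in the simultaneously diagonalizing frame, where the Gram matrix of $H$ is the identity and $\log b_k(H)=\mathrm{diag}(\log\norm{s_i}_{b_k(H)}^2)$, then split the coefficient matrix of $\dot{s}_i$ into a Hermitian part fixed by the flow and a skew-Hermitian remainder $X$ — indeed your justification for reading $\frac{d}{dt}\log H$ as the frozen-frame derivative (legitimate because the Gram matrix equals the identity at the instant considered) is more explicit than the paper's, which performs that identification silently, though the paper does add one point you tacitly assume, namely that $s_i(t)$ can be chosen smooth in $t$ via the standard diagonalization argument. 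The only wrinkle, shared verbatim with the paper's own proof, is a factor of $2$: the differentiated identity gives $A+A^{*}=-k\,\mathrm{diag}(\log\norm{s_i}_{b_k(H)}^2)$, so the Hermitian part is $-\tfrac{k}{2}\,\mathrm{diag}(\log\norm{s_i}_{b_k(H)}^2)$ rather than the asserted $-k\,\mathrm{diag}(\log\norm{s_i}_{b_k(H)}^2)$, a normalization wobble traceable to the paper's inconsistent $e^{\lambda_i t}$ versus $e^{\lambda_i t/2}$ geodesic conventions.
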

\begin{proof}
 A standard unitary diagonalization algorithm implies that $s_i(t)$ is smooth in $t$. 
 If we write $s_i =\sum a_{ij}s_j$
    \begin{align*}
    H(\dot{s_i}, s_j) +H(s_i, \dot{s_j}) 
    &=H(s_i, s_j)^{\cdot} -\dot{H}(s_i, s_j)\\
    &= -k \log{\norm{s_i}_{b_k(H)}^2} \cdot \d_{ij}
    \end{align*}
    Hence $a_{ij}=-k \log{\norm{s_i}_{b_k(H)}^2} \d_{ij}$ modulo the skew-Hermitian form.  
\end{proof} 
In the above special choice of the basis, Lemma \ref{normalization lemma} states that 
$\log \norm{s_i}^2_{b_k(H)}$ automatically satisfies the normalized condition 
\begin{equation}
\frac{1}{N_k}\sum (e^{\log \norm{s_i}^2_{b_k(H)}} -1) =0 
\end{equation}
similar to (\ref{Ricci potential}). 
Recall that it is simply due to the choice of the measure $d\mu_\phi$ 
in the definition of the map $p_k$. 
If one istead uses unnormalized measure $e^{-\phi}d\mu_0$ in the definition of $p_k$, 
the equation (\ref{quantized flow}) quantizes the unnormalized K\"ahler-Ricci flow 
\begin{equation}
\frac{\partial}{\partial t} \omega =-\Ric{\omega}. 
\end{equation}
This point was already observed by \cite{Berm13} in the context 
of his \emph{Bergman iteration}.  

\begin{rem}
Let us fix $t>0$ and a geodesic $H'_u$ which tangents to the flow $H_t$ at $u=0$. 
One can take a suitable orthonormal basis $s'_i$ of $H'_0=H_t$ such that 
\begin{equation}
e^{\frac{\lambda_i(t)}{2} u} s'_i \ \ \ (1 \leq i \leq N_k)
\end{equation}
gives an orthonormal basis of $H'_u$. 
If $s'_i$ is further $b_k(H)$-orthogonal, the equation (\ref{quantized flow 2}) 
is written to a simple form 
\begin{equation}
    \frac{\lambda_i}{k} = -\log{\norm{s_i}_{b_k(H)}^2}.  
\end{equation} 
However, this is not always the case. 
\end{rem}

As we mentioned in the previous paragraph, \cite{Berm13} studied another quantization 
of the K\"ahler-Ricci flow. 
It is taking the Bergman iteration 
\begin{equation}\label{Bergman iteration}
    \b_k^j(\phi):=(\overbrace{\b_k \circ \b_k \circ \cdots \circ \b_k}^{j})(\phi) 
\end{equation}
for a given $\phi \in \cH$. 
We may apply Euler's method to derive the difference equation

\begin{equation}\label{difference equation}
\log{H^{(j+1)}}-\log{H^{(j)}} 
= (t_{j+1}-t_j) k (\log{b_k(H^{(j)})} -\log{H^{(j)}})
\end{equation}
from (\ref{quantized flow}). 
Let us choose the equal interval $h=1/k$ and set $t_j=jh$. 
It then yields $H^{(j+1)}=b_k(H^{(j)})$. 
In other words, our quantized flow interporates the Bergman iteration. 
The general convergence reslut states that 
the solution of the associated difference equation  
converges to the solution of the original differential equation. 
In the present setting the space $\cH_k$ itself varies, however, 
we may still obtain the satisfactory uniform estimate. 

\begin{thm}\label{convergence of Euler's method}
    Let $H_t$ be the solution of \ref{quantized flow} 
    starting from $H_0$. 
    For a fixed $T>0$, there exists a constant $C_T$ independent of $k$ such that 
\begin{equation}
\abs{H_{j/k}-b_k^j(H_0)}  \leq C_Tk^{-1}
\end{equation}
holds for any $j/k \leq T$. 
\end{thm}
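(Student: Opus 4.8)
The plan is to read (\ref{quantized flow}) as a genuine ordinary differential equation on the symmetric space $\cH_k \simeq \GL(N_k;\C)/U(N_k)$ and to invoke the classical first-order convergence of the explicit Euler scheme, the entire point being to keep every constant independent of $k$. Concretely, I would pass to the global logarithmic chart $A := \log H$, which (relative to a fixed reference inner product) identifies $\cH_k$ with the vector space of Hermitian forms on $H^0(X, kL)$. In this chart (\ref{quantized flow}) becomes $\dot A = F_k(A)$ with $F_k(A) = k(\log b_k(e^A) - A)$, and the difference equation (\ref{difference equation}) with step $h = 1/k$ collapses, as already noted, to $A^{(j+1)} = \log b_k(e^{A^{(j)}})$; hence the Euler iterates are exactly $\log b_k^j(H_0)$. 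Thus the assertion is the standard global error bound $\norm{A_{j/k} - A^{(j)}}_\HS \leq C_T/k$ for Euler's method, and everything reduces to verifying consistency and stability \emph{uniformly} in $k$.

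For consistency I would estimate the local truncation error on one step $[t_j, t_{j+1}]$, which is controlled by $\tfrac12 h^2 \sup_{[0,T]}\norm{\ddot A}_\HS$. Here the crucial observation is that, although $F_k$ carries an explicit factor $k$, along the solution one has $\log b_k(H) - \log H = O(1/k)$ (this is precisely the quantization of the bounded Ricci potential, and can be read off from Lemma \ref{normalization lemma} together with the Bergman kernel expansion), so that $\dot A = F_k(A)$ is of size $O(1)$ rather than $O(k)$. Differentiating once more and using the $C^2$ bound of Lemma \ref{L-continuity} to control $\tfrac{d}{dt}\log b_k(H_t)$ along the flow, one obtains $\norm{\ddot A}_\HS = O(1)$ uniformly in $k$. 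With $h = 1/k$ this gives a per-step truncation error of order $k^{-2}$, so summing the $\lfloor Tk\rfloor$ steps leaves room for exactly one factor $k^{-1}$ in the final bound, provided the errors do not amplify worse than a $k$-independent constant.

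Controlling that amplification is the stability half, and it is where I expect the real difficulty to lie. Writing $e_j = A_{t_j} - A^{(j)}$ and telescoping, the homogeneous part of the recursion is governed by the one-step Lipschitz constant of the map $H \mapsto b_k(H)$, i.e.\ of $A \mapsto \log b_k(e^A)$. A naive application of Lemma \ref{L-continuity} yields only a Lipschitz constant of order $1$, and iterating such a bound $\lfloor Tk\rfloor$ times would produce an inadmissible factor $e^{O(k)T}$. The point is therefore to extract from the proof of Lemma \ref{L-continuity} the sharper statement that the derivative of $b_k$ along unit-speed geodesics is bounded by $1 + O(1/k)$ rather than $O(1)$ --- the factor $\tfrac{k+1}{k}$ and the weighted average $\tfrac{\sum \lambda_i \abs{s_i}^2}{\sum\abs{s_i}^2}$ appearing there, the latter being at most $(\sum\lambda_i^2)^{1/2}\leq 1$, are exactly what make this possible --- so that the one-step map is $(1 + O(1/k))$-Lipschitz in the Riemannian distance. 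Combined with the nonpositive curvature of $\cH_k$, which guarantees that the logarithmic chart distorts distances by a $k$-independent amount, a discrete Grönwall inequality then converts per-step amplification $1 + O(1/k)$ over $\lfloor Tk\rfloor$ steps into a uniform factor $e^{O(T)}$; multiplying by the accumulated truncation error of order $k^{-1}$ yields $\norm{e_j}_\HS \leq C_T k^{-1}$ for all $j/k \leq T$, which is the claim.
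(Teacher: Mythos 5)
Your overall strategy is exactly the paper's: read the Bergman iteration $H^{(j+1)} = b_k(H^{(j)})$ as the explicit Euler scheme with step $h = 1/k$ for the flow (\ref{quantized flow}), and run the classical consistency-plus-stability argument, the entire content being the $k$-independence of the constants. Your stability half is the part the paper states tersest, and you handle it correctly: the paper's induction is precisely the per-step recursion $\abs{e_{j+1}} \leq (1 + L_1 k^{-1})\abs{e_j} + L_2 k^{-2}$, i.e.\ that the increment map $A \mapsto \log b_k(e^A) - A$ has Lipschitz constant $O(1/k)$, and your extraction of the $(1+O(1/k))$-Lipschitz property of the one-step map from the factor $\tfrac{k+1}{k}$ and the weighted-average contraction $\abs{\sum \lambda_i \abs{s_i}^2 / \sum \abs{s_i}^2} \leq \max_i \abs{\lambda_i}$ inside the proof of Lemma \ref{L-continuity} is exactly what justifies the paper's assertion that $L_1$ is independent of $k$; a crude $O(1)$ Lipschitz bound would indeed be fatal, as you say.

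There is, however, a genuine error in your consistency half: the claim $\log b_k(H) - \log H = O(1/k)$, hence $\dot A = O(1)$. This difference \emph{is} the quantized Ricci potential, and like $\rho$ it is of size $O(1)$, not $O(1/k)$: its eigenvalues are $\log \norm{s_i}^2_{b_k(H)}$, and Lemma \ref{normalization lemma} only forces their exponentials to \emph{average} to $1$ (namely $\sum \norm{s_i}^2_{b_k(H)} = N_k$), not each to be $1 + O(1/k)$. The Bergman expansion you invoke gives $\b_k(\phi) - \phi = O(1/k)$ only at the level of potentials, where $f_k$ already carries a factor $1/k$; at the matrix level, testing $b_k(p_k(\phi))$ against peak sections shows $\norm{s_i}^2_{b_k(H)} \approx e^{-u_k(x_i)}$ with $u_k = \log\big(N_k^{-1}\sum_j \abs{s_j}^2 e^{-k\phi}\big)$ converging to a Ricci-potential-type $O(1)$ function. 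So in your chart $\dot A = k(\log b_k(e^A) - A) = O(k)$, your bound $\norm{\ddot A} = O(1)$ fails as justified (even with the sharp derivative estimate one only gets $\norm{\ddot A} = O(k)$), the per-step truncation is only $O(k^{-1})$, and summing $\lfloor Tk \rfloor$ steps yields $O(1)$ rather than the claimed $C_T k^{-1}$. The repair is the normalization you skipped: run the argument for $\tilde A := \tfrac{1}{k}\log H$, which is the scale in which the theorem is actually consumed in Theorem \ref{quantized flow vs KRF} (since $f_k$ contains $\tfrac{1}{k}\log$). Then $\dot{\tilde A} = \log b_k(H) - \log H = O(1)$ is simply boundedness of the quantized Ricci potential, and $\ddot{\tilde A} = \big(D(\log b_k \circ \exp) - \id\big)[\dot A] = O(k^{-1}) \cdot O(k) = O(1)$ follows from the very sharpened derivative estimate you already isolated for stability. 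With this correction your argument coincides with the paper's proof.
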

\begin{proof}
The proof repeats the general convergence result for 
the Euler method. 
Using Taylor's formula we observe 
\begin{align}
    &\log{H_{(j+1)/k}}-\log{H^{(j+1)}}\\
    &=(\log{H_{j/k}}+k^{-1}H_{j/k}^{-1}\dot{H}_{j/k}+O(k^{-2}))
    -\log{H^{(j)}}-(\log{b_k(H^{(j)})-\log{H^{(j)}}}) \\
    &=\log{H_{j/k}}+(\log{b_k(H_{j/k})}-\log{H_{j/k}})+O(k^{-2})
    -\log{H^{(j)}}-(\log{b_k(H^{(j)})-\log{H^{(j)}}}) 
\end{align}
hence it follows 
\begin{align}
    &\abs{\log{H_{(j+1)/k}}-\log{H^{(j+1)}}}\\
    &\leq \abs{\log{H_{j/k}}-\log{H^{(j)}}}
    +\abs{(\log{b_k(H_{j/k})}-\log{H_{j/k}})
    -(\log{b_k(H^{(j)})-\log{H^{(j)}}})}+L_2k^{-2}\\
    & \leq (1+L_1 k^{-1})\abs{\log{H_{j/k}}-\log{H^{(j)}}} +L_2k^{-2}. 
\end{align} 
In the above $L_1$ is the Lipschitz constant and 
$L_2$ is determined by the remainder term of Taylor's formula. 
The both constants are independent of $k$ by Lemma \ref{L-continuity}. 
By induction we deduce 
\begin{align}
    \abs{\log{H_{j/k}}-\log{H^{(j)}}}
    \leq L_2k^{-2}\sum_{i=0}^j(1+L_1k^{-1})^{i}
    \leq L_2\frac{e^{L_1T}}{L_1}k^{-1}
\end{align}
which concludes the proof.  

\end{proof}

As Berman proved, the Bergman iteration approximates the K\"ahler-Ricci flow 
in the following sense.  

\begin{thm}[\cite{Berm13} Theorem 4.18]\label{Berman}
    Let $\phi_t$ be the normalized K\"ahler-Ricci flow (\ref{KRF}) 
    and $\b_k^j(\phi_0)$ be the Bergman iteration for the initial metric $\phi_0$, 
    defined in (\ref{Bergman iteration}). 
    Then for any $T>0$ there exists a constant $C_T$ such that 
    \begin{equation}
    \abs{\phi_{j/k} -\b_k^j(\phi_0)} \leq C_T j k^{-2}
    \end{equation}
    holds for any $j/k \leq T$. 
\end{thm}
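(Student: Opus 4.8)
The plan is to recognize the Bergman iteration as the explicit Euler scheme for the K\"ahler-Ricci flow written in the measure form (\ref{KRF3}), and then to run the standard consistency-plus-stability argument for one-step methods, this time tracking every error constant uniformly in $k$. The first task is to make the consistency of a single Bergman step precise. Writing $B_k(\phi)=\sum_i \abs{s_i}^2 e^{-k\phi}$ for a $p_k(\phi)$-orthonormal basis $s_i$, one checks $\int_X B_k(\phi)\,d\mu_\phi = N_k$ and $\b_k(\phi)-\phi=\frac{1}{k}\log\frac{B_k(\phi)}{N_k}$. The heart of this step is the Bergman kernel asymptotic expansion for the weighted inner product $p_k(\phi)$, which I would invoke in the form
\[
\frac{1}{N_k}B_k(\phi) = \frac{V^{-1}\omega_\phi^n}{d\mu_\phi}\big(1 + O(k^{-1})\big)
\]
in $C^0$, uniformly for $\phi$ in a bounded family. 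Taking logarithms and comparing with (\ref{KRF3}) gives
\[
\b_k(\phi)-\phi = -\frac{1}{k}\log\bigg[\frac{d\mu_\phi}{V^{-1}\omega_\phi^n}\bigg] + O(k^{-2}),
\]
so that one Bergman step reproduces one Euler step of the flow $\partial_t\phi = -\log[d\mu_\phi/(V^{-1}\omega_\phi^n)]$ with step size $1/k$, up to a local truncation error of size $O(k^{-2})$.

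Second, I would secure the a priori estimates that make the $O(k^{-2})$ uniform. By \cite{Cao85} the flow $\phi_t$ stays, for $t\in[0,T]$, in a set bounded in every $C^m$-norm; I would show by induction on $j$ that the Bergman iterates $\b_k^j(\phi_0)$ with $j/k\leq T$ remain in a comparable bounded set. On such a set the Bergman expansion and its derivatives are uniform, so both the time-Taylor error $\phi_{(j+1)/k}-\phi_{j/k}=\frac{1}{k}\dot{\phi}_{j/k}+O(k^{-2})$ and the one-step error above are controlled by constants depending only on $T$.

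Third, I would control the accumulation of errors. Setting $e_j=\abs{\phi_{j/k}-\b_k^j(\phi_0)}$, the two consistency estimates combine into a recursion $e_{j+1}\leq (\text{stability factor})\cdot e_j + O(k^{-2})$. The clean linear-in-$j$ bound $C_T\,j\,k^{-2}$, sharper than the exponential Gr\"onwall bound used in Theorem \ref{convergence of Euler's method}, comes from a maximum-principle argument: one shows that the solution operator of the flow and the map $\b_k$ are non-expansive in the sup-metric (using that $\b_k$ commutes with addition of constants and respects the comparison of potentials), so the stability factor is $1$ and the local errors merely add. Telescoping from $e_0=0$ then yields $e_j\leq C_T\,j\,k^{-2}$.

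The main obstacle is the first step: establishing the Bergman kernel expansion with the canonical measure $d\mu_\phi$ as weight, \emph{uniformly} over the family of metrics swept out by the flow and its discretizations and in enough derivatives to feed the stability estimate, rather than merely pointwise for a fixed $\phi$. The non-expansiveness in the third step is the secondary difficulty, since the equation is of Monge-Amp\`ere type and the comparison must accommodate both the $\log(\omega_\phi^n/\omega_0^n)$ term and the $\phi$-dependence of $d\mu_\phi$.
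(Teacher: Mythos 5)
The paper does not actually prove this statement: it is imported as a black box, cited as Theorem 4.18 of \cite{Berm13}, and used only as an ingredient in the proof of Theorem \ref{quantized flow vs KRF}. So the relevant comparison is with Berman's original argument, and your proposal is essentially a reconstruction of it: Berman likewise reads the Bergman iteration as an explicit Euler step for the flow in the measure form (\ref{KRF3}), obtains the one-step consistency $\b_k(\phi)-\phi=-\tfrac{1}{k}\log\bigl[d\mu_\phi/(V^{-1}\omega_\phi^n)\bigr]+O(k^{-2})$ from the Tian--Catlin--Zelditch expansion (note that up to the normalizing constant $\int_X e^{-\phi}d\mu_0$, the inner product $p_k(\phi)$ has weight $e^{-(k+1)\phi}d\mu_0$, so one needs the standard expansion with a $k$-dependent perturbation of the weight, uniformly with derivatives over $C^\infty$-compact families), and propagates the local error using comparison properties of $\b_k$ together with uniform a priori bounds on the iterates along $[0,T]$, starting from \cite{Cao85} for the flow itself. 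The three ingredients you isolate, including the uniformity of the expansion that you flag as the main obstacle, are exactly the load-bearing points there.

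One step of your outline is wrong as stated, though harmlessly so. In the Fano normalized setting the stability factor is not $1$: neither the flow semigroup nor $\b_k$ is non-expansive in the sup metric. For the flow, rewriting (\ref{KRF3}) via (\ref{canonical measure}) produces a zeroth-order term $+(\phi-\psi)$ with the expansive sign, so two solutions only satisfy $\norm{\phi_t-\psi_t}_\infty\leq e^{Ct}\norm{\phi_0-\psi_0}_\infty$. For the map $\b_k$, constants-equivariance and monotonicity do not both hold in the normalization used here: from $\phi\leq\psi\leq\phi+c$ one gets the sandwich $e^{-(k+1)c}p_k(\phi)\leq p_k(\psi)\leq e^{c}p_k(\phi)$, whence $\norm{\b_k(\phi)-\b_k(\psi)}_\infty\leq\tfrac{k+1}{k}\norm{\phi-\psi}_\infty$, a one-step Lipschitz constant $1+k^{-1}$ rather than $1$. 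The conclusion survives: the recursion $e_{j+1}\leq(1+Ck^{-1})e_j+Ck^{-2}$ with $e_0=0$ still telescopes to $e_j\leq Ce^{CT}jk^{-2}$ for $j\leq kT$, which is the asserted bound with $C_T$ absorbing $e^{CT}$. This is precisely the bookkeeping the author carries out for the companion statement, Theorem \ref{convergence of Euler's method}, where the factor $(1+L_1k^{-1})$ appears explicitly; you should replace ``the stability factor is $1$ and the local errors merely add'' accordingly.
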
 

Combining the result with Theorem \ref{convergence of Euler's method}, we may show that 
the quantized flow as well approaches the K\"ahler-Ricci flow. 

\begin{thm}\label{quantized flow vs KRF}
Let $\phi_t$ be the normalized K\"ahler-Ricci flow (\ref{KRF}) 
and $H=H_t$ be the quantized flow (\ref{quantized flow}) initiated from $H_0=p_k(\phi_0)$. 
Then for any $T>0$ there exists a constant $C_T$ such that 
\begin{equation}
\abs{\phi_{(j+1)/k} -f_k(H_{j/k})} \leq C_T k^{-1}
\end{equation}
holds for any $(j+1)/k \leq T$. 
\end{thm}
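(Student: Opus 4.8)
The plan is to bridge the quantized flow $H_t$ and the K\"ahler-Ricci flow $\phi_t$ through the Bergman iteration, feeding in Theorem \ref{convergence of Euler's method} on the side of $\cH_k$ and Theorem \ref{Berman} on the side of $\cH$. The key algebraic input is a commutation identity between the two iterations $b_k = p_k \circ f_k$ and $\b_k = f_k \circ p_k$. Since $H_0 = p_k(\phi_0)$, associativity of composition yields $(p_k \circ f_k)^j \circ p_k = p_k \circ (f_k \circ p_k)^j$, so that
\begin{equation}
b_k^j(H_0) = p_k(\b_k^j(\phi_0)) \quad\text{and}\quad f_k(b_k^j(H_0)) = \b_k^{j+1}(\phi_0).
\end{equation}
This already explains the index shift $j \mapsto j+1$ in the statement: the form $H_{j/k}$ lies one $p_k$-step behind in $\cH_k$, and applying the Fubini-Study map $f_k$ advances the Bergman iteration by exactly one step.

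With this identity I would split the target quantity by the triangle inequality,
\begin{equation}
\abs{\phi_{(j+1)/k} - f_k(H_{j/k})} \leq \abs{\phi_{(j+1)/k} - \b_k^{j+1}(\phi_0)} + \abs{f_k(b_k^j(H_0)) - f_k(H_{j/k})},
\end{equation}
and estimate the two terms independently. For the first term I apply Theorem \ref{Berman} with iteration index $j+1$, which gives the bound $C_T (j+1) k^{-2}$; since $(j+1)/k \leq T$ forces $j+1 \leq Tk$, this is at most $C_T T k^{-1}$. This is the dominant contribution and already produces the desired order $k^{-1}$.

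For the second term the plan is to use the uniform Lipschitz continuity of $f_k$. Writing a nearby form as $H' = e^A H$ with $A$ Hermitian and diagonalized by an $H$-orthonormal basis $s_i$ with eigenvalues $\lambda_i$, a direct computation gives $f_k(H') - f_k(H) = \frac{1}{k} \log \frac{\sum_i e^{-\lambda_i} \abs{s_i}^2}{\sum_i \abs{s_i}^2}$, which is $\frac{1}{k} \log$ of a weighted average of the numbers $e^{-\lambda_i}$. Hence at every point $\abs{f_k(H') - f_k(H)} \leq \frac{1}{k} \max_i \abs{\lambda_i} \leq \frac{1}{k} \norm{A}_{\HS}$, so $f_k$ is a $\frac{1}{k}$-contraction from the Hilbert-Schmidt distance on $\cH_k$ to the $C^0$-distance on $\cH$. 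Combining this with $\abs{H_{j/k} - b_k^j(H_0)} \leq C_T k^{-1}$ from Theorem \ref{convergence of Euler's method} bounds the second term by $O(k^{-2})$, which is absorbed into the first.

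The one point that needs genuine care is the compatibility of the norm conventions on the two sides: Theorem \ref{convergence of Euler's method} controls a Hilbert-Schmidt distance in $\cH_k$, while the target estimate lives in $C^0(X)$, and one must check that the contraction estimate for $f_k$ converts the former into the latter with a constant independent of $k$. Here the favourable $\frac{1}{k}$ factor is comforting, since even a merely $k$-independent Lipschitz bound for $f_k$ would already suffice to close the argument. Once this is settled, summing the two contributions gives $\abs{\phi_{(j+1)/k} - f_k(H_{j/k})} \leq C_T k^{-1}$ as claimed, the remaining work being the same bookkeeping of constants as in Theorem \ref{convergence of Euler's method}.
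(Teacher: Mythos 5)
Your proposal is correct and follows essentially the same route as the paper's proof: the identical triangle-inequality split through the Bergman iteration $\b_k^{j+1}(\phi_0)$, with Theorem \ref{Berman} at index $j+1$ controlling the first term and Theorem \ref{convergence of Euler's method} plus Lipschitz continuity of $f_k$ controlling the second, your commutation identity $f_k(b_k^j(H_0))=\b_k^{j+1}(\phi_0)$ being exactly what the paper uses implicitly when it rewrites the second term as $\abs{f_k(b_k^j(H_0))-f_k(H_{j/k})}$. Your explicit computation giving $f_k$ a $\frac{1}{k}$-contraction from the distance on $\cH_k$ to $C^0(X)$ is in fact sharper than the paper's stated Lipschitz constant of at most one (deduced there from the argument of Lemma \ref{L-continuity}), though, as you observe, either bound closes the argument.
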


\begin{proof}
    The triangle inequality yields 
    \begin{align}
        \abs{\phi_{(j+1)/k}-f_k(H_{j/k})}
        \leq \abs{\phi_{(j+1)/k}- \b_k^{j+1}(\phi_0)}
        +\abs{\b_k^{j+1}(\phi_0)-f_k(H_{j/k}) }. 
    \end{align}
    The first term can be estimated by Theorem \ref{Berman}. 
    The second term $\abs{f_k(b_k^j(H_0))-f_k(H_{j/k}) }$
    is estimated by Theorem \ref{convergence of Euler's method}. 
    Note the map $H \mapsto f_k(H)$ is continuous 
    with Lipschitz constant not greater than one, 
    as one can see from the argument in Lemma \ref{L-continuity}.  
\end{proof}

\subsection{Quantized entropy functional}
We will also quantize the entropy functional defined in (\ref{entropy}). 
It is equivalent to the relative entropy in the quantum information theory. 
\begin{dfn}
For two Hermitian forms $A, B$ quantum relative entropy 
is defined to be 
\begin{equation}
    S(A \vert B) 
    := \Tr A(\log{A} -\log{B}). 
\end{equation}
We define the quantized entropy of $H \in \cH_k$ by 
\begin{equation}\label{equatntized entropy}
S_k(H) =  S(b_k(H) \vert H). 
\end{equation} 
From the general theory of quantum entropy, $S_k$ is nonnegative. 
Moreover, $S_k(H)=0$ precisely when $H$ is balanced. 
\end{dfn} 

In our point of energy-theoretic view 
it is more convenient to rewrite 
the entropy into the convex conjugate form, 
as an anologue of (\ref{convex conjugate form0}). 

\begin{lem}\label{convex conjugate form}
    The quantized entropy of $H \in \cH_k$ by the convex conjugate form 
    \begin{equation}
    S_k(H) = \sup_{\lambda_i \in \R}
     \bigg\{ \sum -\frac{\lambda_i}{k} \frac{\norm{s_i}^2_{b_k(H)}}{N_k}
      - \log \frac{1}{N_k}\sum e^{-\frac{\lambda_i}{k} }\bigg\}, 
    \end{equation} 
    where $s_i$ is chosen $H$-orthonormal and $b_k(H)$-orthogonal. 
\end{lem}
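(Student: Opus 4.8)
The plan is to reduce everything to the eigenvalues of $b_k(H)$ relative to $H$ and then to recognise the right-hand side as the Legendre dual (Gibbs variational principle) of a log-partition function. First I would fix the basis $s_i$ that is simultaneously $H$-orthonormal and $b_k(H)$-orthogonal, whose existence was recorded just after the definition of balanced forms, and write $\mu_i := \norm{s_i}^2_{b_k(H)}$ for the generalized eigenvalues. In this basis $H$ and $b_k(H)$ are represented by $I$ and $\mathrm{diag}(\mu_i)$ respectively, so they commute and $\log b_k(H)-\log H$ is simply $\mathrm{diag}(\log\mu_i)$. A direct computation then collapses the non-commutative relative entropy $S_k(H)=\Tr b_k(H)(\log b_k(H)-\log H)$ to its classical, diagonal counterpart. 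Invoking Lemma \ref{normalization lemma} so that $\sum_i \mu_i = N_k$, the vector $p_i := \mu_i/N_k$ is a probability distribution and one obtains, with the normalisation conventions of this section,
\begin{equation}
S_k(H) = \sum_i \frac{\mu_i}{N_k}\log\frac{\mu_i}{N_k} + \log N_k,
\end{equation}
which is exactly the relative entropy of $(p_i)$ against the uniform distribution $(1/N_k)$ on the $N_k$ eigenspaces.

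Next I would evaluate the right-hand side. Substituting $x_i := -\lambda_i/k$, the bracket becomes $\sum_i x_i p_i - \log\frac{1}{N_k}\sum_i e^{x_i}$, the Donsker--Varadhan/Gibbs functional. It is concave in the $x_i$ and invariant under a common additive shift $x_i \mapsto x_i + c$ because $\sum_i p_i = 1$; differentiating shows that every critical point satisfies $e^{x_i}\big/\sum_j e^{x_j} = p_i$, i.e. the Gibbs distribution $e^{x_i} \propto p_i$. Translated back, the maximizer is $\lambda_i = -k\log\norm{s_i}^2_{b_k(H)}$ up to the irrelevant constant, precisely the relation appearing in the Remark following Lemma \ref{orthonormal-orthogonal}; this is the quantum counterpart of the fact, noted in Section \ref{Kahler-Ricci flow and optimal degeneration}, that the Ricci potential $\dot\phi$ attains the supremum in (\ref{convex conjugate form0}). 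Plugging $x_i = \log p_i$ back in gives $\sum_i p_i\log p_i + \log N_k$, matching the closed form for $S_k(H)$ above and thereby proving the identity.

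The one place that genuinely uses the structure at hand, rather than a formal manipulation, is the passage from the non-commutative relative entropy to its diagonal form: $\log b_k(H)-\log H$ admits no simple description for general positive Hermitian forms, and it is only the simultaneous diagonalization by $s_i$, which makes $H$ and $b_k(H)$ commute, that lets $\log b_k(H)-\log H$ be read off as $\mathrm{diag}(\log\mu_i)$. Beyond that the computation is routine; the only care needed is the bookkeeping of the $1/k$ and $1/N_k$ normalizations, for which Lemma \ref{normalization lemma}, ensuring $\sum_i \mu_i = N_k$ and hence that both the spectral weights and the uniform weights are genuine probability vectors, is the essential input. Strict concavity of the Gibbs functional guarantees the supremum is attained, so the ``$\sup$'' is in fact a maximum realized at the Gibbs distribution.
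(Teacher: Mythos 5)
Your proposal is correct and takes essentially the same route as the paper, whose entire proof is a one-line application of the Lagrange multiplier method: the supremum is attained at $\lambda_i=-k\log\norm{s_i}^2_{b_k(H)}$ (up to a common shift) with value $\log N_k+\sum_i\frac{\mu_i}{N_k}\log\frac{\mu_i}{N_k}$, $\mu_i:=\norm{s_i}^2_{b_k(H)}$, which is then identified with $S_k(H)$. You have merely made explicit what the paper leaves implicit --- the simultaneous diagonalization collapsing the quantum relative entropy to its classical diagonal form, the shift invariance coming from $\sum_i\mu_i=N_k$ (Lemma \ref{normalization lemma}), and the $1/N_k$-normalized trace convention that the paper itself uses in Proposition \ref{S as slope along the flow}.
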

\begin{proof}
As an easy application of Lagrange multiplier method 
the supremum is attained by 
\begin{equation}
 \log{N_k} +\sum \frac{\norm{s_i}^2_{b_k(H)}}{N_k} 
 \log{\frac{\norm{s_i}^2_{b_k(H)}}{N_k}} 
 \end{equation}
 which is equivalent to $S_k(H)$. 
\end{proof}

\begin{prop}\label{S bounds minimal slope}
    Quantized entropy bounds the minimal subtracted slope of the L-functional 
    along the geodesics. Namely, 
    \begin{equation}
        S_k(H_0)\leq   \sup_{H_t}\bigg\{
         -\frac{d}{dt}\bigg\vert_{t=0} L\circ f_k(H_t)
         -\log\frac{1}{N_k}\sum e^{-\frac{\lambda_i}{k}}
         \bigg\}
    \end{equation}
    where $H_t$ is arbitrary geodesic emanating from $H_0$. 
\end{prop}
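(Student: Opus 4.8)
The plan is to reduce the inequality to the convex conjugate form of Lemma \ref{convex conjugate form} by testing the right-hand supremum against a sufficiently rich subfamily of geodesics. First I would fix a basis $s_i$ of $H^0(X,kL)$ which is simultaneously $H_0$-orthonormal and $b_k(H_0)$-orthogonal, using the unitary diagonalization noted after the definition of balanced forms. For each tuple $(\lambda_1,\dots,\lambda_{N_k}) \in \R^{N_k}$ I would consider the geodesic $H_t$ emanating from $H_0$ whose orthonormal frame is $e^{\lambda_i t/2}s_i$; these are exactly the geodesics whose velocity is diagonal in the chosen basis, and $\lambda_i/k$ is the finite-dimensional analogue of the derivative $\dot{\phi}$ that appears in the slope formula (\ref{slope formula}).

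The heart of the matter is to compute the subtracted slope along such a geodesic and recognize it as the bracket in Lemma \ref{convex conjugate form}. Differentiating $L\circ f_k(H_t) = -\log\frac{1}{V}\int_X e^{-f_k(H_t)}d\mu_0$ at $t=0$ produces $\int_X \dot{f}_k\, d\mu_{f_k(H_0)}$ with $\dot{f}_k|_{t=0} = \frac{1}{k}(\sum_i \lambda_i |s_i|^2)/(\sum_j |s_j|^2)$. The key algebraic step is to invoke the defining identity $\sum_j |s_j|^2 = N_k e^{kf_k(H_0)}$, so that $\frac{1}{\sum_j |s_j|^2}\,d\mu_{f_k(H_0)} = \frac{1}{N_k}e^{-kf_k(H_0)}\,d\mu_{f_k(H_0)}$; this turns each integral $\int_X |s_i|^2 e^{-kf_k(H_0)}\,d\mu_{f_k(H_0)}$ into precisely $\norm{s_i}^2_{b_k(H_0)}$. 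Hence $-\frac{d}{dt}\big|_{t=0} L\circ f_k(H_t) = \sum -\frac{\lambda_i}{k}\frac{\norm{s_i}^2_{b_k(H_0)}}{N_k}$, and adding the second term $-\log\frac{1}{N_k}\sum e^{-\lambda_i/k}$ reproduces exactly the expression inside the supremum of Lemma \ref{convex conjugate form}.

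Taking the supremum over all tuples then identifies the value with $S_k(H_0)$ by Lemma \ref{convex conjugate form}. Since this one-parameter-per-coordinate family is contained in the set of all geodesics emanating from $H_0$, the supremum over all geodesics dominates it, which yields $S_k(H_0)\leq \sup_{H_t}\{\,\cdots\,\}$. I expect the only real subtlety to be bookkeeping: the slope computation lands exactly on the convex-conjugate bracket only because the frame $s_i$ was chosen to diagonalize $b_k(H_0)$, and it is precisely this restriction that produces an inequality rather than an identity — for a general geodesic the frame diagonalizing the velocity need not be $b_k(H_0)$-orthogonal, so the first term would instead involve the diagonal entries $b_k(H_0)(s_i',s_i')$ rather than the eigenvalues, and one should not expect equality. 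The one place to be careful is matching the normalization of the geodesic speed (the factor $\tfrac12$ in $e^{\lambda_i t/2}s_i$) with the $e^{-\lambda_i/k}$ appearing in the second term, so that the free parameter $\lambda_i$ genuinely coincides with the supremum variable of Lemma \ref{convex conjugate form}.
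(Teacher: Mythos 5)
Your proposal is correct and takes essentially the same approach as the paper: the identical slope computation along geodesics with frame $e^{\lambda_i t/2}s_i$, reduced to $\sum_i\frac{\lambda_i}{k}\frac{\norm{s_i}^2_{b_k(H_0)}}{N_k}$ via $\sum_j\abs{s_j}^2=N_k e^{kf_k(H_0)}$, combined with Lemma \ref{convex conjugate form} applied to the subfamily of geodesics diagonal in a simultaneously $H_0$-orthonormal and $b_k(H_0)$-orthogonal basis. Your closing observation about why this yields only an inequality is precisely the paper's own remark that for a general geodesic the diagonalizing frame need not be $b_k(H_0)$-orthogonal.
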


\begin{proof}
    Let $s_i$ be the basis which is $H_0$-orthonormal. 
    Take an arbitrary geodesic $H_t$ and $\lambda_i$ so that  
    $e^{\frac{\lambda_i}{2} t} s_i$ are orthonormal for $H_t$. 
    We compute the differential as 
    \begin{align}
        \frac{d}{dt}L\circ f_k(H_t)
        &=\frac{d}{dt}\bigg\{
             -\log \frac{1}{V}\int_X e^{-f_k(H)}d\mu_0
             \bigg\}\\
        &= \int_X \frac{\lambda_i \abs{s_i}^2}{k\sum \abs{s_i}^2}
        d\mu_{f_k(H_0)}\\
     &=\sum_i \frac{\lambda_i}{k} 
     \int_X \abs{s_i}^2 \frac{e^{-kf_k(H_0)}}{N_k} d\mu_{f_k(H_0)} \\
     &= \sum_i \frac{\lambda_i}{k} 
     \frac{\norm{s_i}_{b_k(H_0)}^2}{N_k}. 
    \end{align}  
Now the point is that $s_i$ is not necessarily $b_k(H_0)$-orthogonal. 
If further $s_i$ is $b_k(H_0)$-orthogonal, it holds  
\begin{equation}
S_k(H_0)=
\sum_i \frac{\lambda_i}{k} \frac{\norm{s_i}_{b_k(H_0)}^2}{N_k} 
-\log\frac{1}{N_k}\sum e^{-\frac{\lambda_i}{k}}. 
\end{equation} 
\end{proof} 

\begin{rem}
    If we replace the subtracted term with 
 \begin{equation}
   \frac{d}{dt}E_k(H_t)= \sum \frac{\lambda_i}{k}, 
 \end{equation} 
 then the quantized D-energy 
 $D_k := L \circ f_k -E_k$ gives a similar bound 
 \begin{equation}
    S(H_0)\leq -\frac{d}{dt} D_k(H_t) 
 \end{equation}
 for an arbitrary geodesic $H_t$. 
\end{rem} 

We will show that 
the above supremum is attained by the direction of the quantized flow. 
the subtracted term should be zero by Lemma \ref{normalization lemma} 
if we set $\lambda_i := -k\log{\norm{s_i}_{b_k(H_0)}^2}$. 

\begin{prop}\label{S as slope along the flow}
 Along the quantized flow (\ref{quantized flow}), 
 \begin{align}
    S_k(H) &= 
         -\frac{d}{dt} L\circ f_k(H_t)\\
         &=-\sum \frac{\lambda_i}{k} \frac{\norm{s_i}_{b_k(H)}^2}{N_k}
         - \log{\frac{1}{N_k}\sum e^{-\frac{\lambda_i}{k}}}, 
 \end{align} 
 where $\lambda_i:=-k\log{\norm{s_i}_{b_k(H)}^2}$. 
\end{prop}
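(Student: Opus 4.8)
The plan is to read off the tangent direction of the quantized flow at a fixed point $H=H_t$, recognize it as exactly the extremizer appearing in Lemma~\ref{convex conjugate form} and Proposition~\ref{S bounds minimal slope}, and thereby turn the inequality of Proposition~\ref{S bounds minimal slope} into the claimed chain of equalities. First I would fix $t$ and, by simultaneous diagonalization, pick a basis $s_i$ of $H^0(X,kL)$ that is at once $H$-orthonormal and $b_k(H)$-orthogonal. In this basis $\log H=0$ and $\log b_k(H)=\mathrm{diag}(\log\norm{s_i}_{b_k(H)}^2)$, so the flow equation (\ref{quantized flow}) shows that the velocity of $H_t$ is diagonal with entries governed by $k\log b_k(H)$. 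Matching this against the eigenvalue-speed normalization of the Remark after Lemma~\ref{orthonormal-orthogonal} (where $e^{\lambda_i u/2}s_i$ is orthonormal for the varying form), I get that the flow moves in the direction with
\begin{equation*}
\lambda_i=-k\log\norm{s_i}_{b_k(H)}^2,
\end{equation*}
which is precisely the simplified form of (\ref{quantized flow 2}) recorded there.

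Next I would invoke the slope computation carried out inside the proof of Proposition~\ref{S bounds minimal slope}. Although it was written for a geodesic, $\frac{d}{dt}L\circ f_k(H_t)$ is a first-order quantity depending only on the velocity at $H$, so it transfers verbatim to the quantized flow with the basis $s_i$ and the eigenvalues $\lambda_i$ above, giving
\begin{equation*}
\frac{d}{dt}L\circ f_k(H_t)=\sum_i\frac{\lambda_i}{k}\frac{\norm{s_i}_{b_k(H)}^2}{N_k}.
\end{equation*}
Then I would produce the subtracted term at no cost: since $e^{-\lambda_i/k}=\norm{s_i}_{b_k(H)}^2$, Lemma~\ref{normalization lemma} in the form $\sum\norm{s_i}_{b_k(H)}^2=N_k$ gives $\log\frac{1}{N_k}\sum e^{-\lambda_i/k}=\log 1=0$. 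Adding this vanishing term to $-\frac{d}{dt}L\circ f_k(H_t)$ yields exactly the second displayed expression of the proposition, establishing the lower equality.

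Finally, to close the loop with $S_k(H)$ I would observe that our $\lambda_i$ is the Lagrange-multiplier optimizer from the proof of Lemma~\ref{convex conjugate form}: the stationarity condition there is $\norm{s_i}_{b_k(H)}^2/N_k=e^{-\lambda_i/k}/\sum_j e^{-\lambda_j/k}$, which holds precisely because of the normalization $\sum\norm{s_i}_{b_k(H)}^2=N_k$. Hence the supremum defining $S_k(H)$ in Lemma~\ref{convex conjugate form} is attained at this $\lambda_i$ and equals the displayed expression, which gives the remaining equality $S_k(H)=-\frac{d}{dt}L\circ f_k(H_t)$.

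The one point demanding care---what I regard as the main obstacle---is the very first step: the quantized flow is \emph{not} a geodesic, so I must verify that its instantaneous velocity, expressed in the simultaneously diagonalizing basis, coincides with the geodesic direction of eigenvalues $\lambda_i=-k\log\norm{s_i}_{b_k(H)}^2$. This is exactly what licenses applying the geodesic slope formula of Proposition~\ref{S bounds minimal slope} along the flow; once the velocity is correctly identified, the remaining two steps are bookkeeping with Lemmas~\ref{normalization lemma} and~\ref{convex conjugate form}.
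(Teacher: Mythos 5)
Your argument is correct, but it is organized differently from the paper's own proof, even though the underlying integral computation is the same. The paper proves the proposition by differentiating $L\circ f_k(H_t)$ directly along the flow in a \emph{moving} frame: it takes the time-dependent $H_t$-orthonormal, $b_k(H_t)$-orthogonal basis $s_i(t)$, substitutes the formula for $\dot{s}_i$ from Lemma \ref{orthonormal-orthogonal}, kills the skew-Hermitian terms $X_{ij}$ by $b_k(H)$-orthogonality, and identifies the result with $S_k(H)$ via the diagonal expression $S_k(H)=\sum_i\frac{\norm{s_i}^2_{b_k(H)}}{N_k}\log{\norm{s_i}^2_{b_k(H)}}$, with no appeal to Lemma \ref{convex conjugate form}. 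You instead freeze the basis at time $t$, identify the instantaneous velocity with the tangent geodesic of weights $\lambda_i=-k\log{\norm{s_i}^2_{b_k(H)}}$ --- exactly the ``simple form'' recorded in the Remark after Lemma \ref{orthonormal-orthogonal}, and your resolution of the caveat there is sound, since an $H$-orthonormal, $b_k(H)$-orthogonal basis automatically diagonalizes $\log{b_k(H)}$ and hence the flow velocity, eigenvalue degeneracies causing no trouble --- then recycle the geodesic slope computation of Proposition \ref{S bounds minimal slope}, which is legitimate because $\frac{d}{dt}L\circ f_k(H_t)$ depends only on $(H_t,\dot{H}_t)$ by the chain rule, and finally close with the optimizer of Lemma \ref{convex conjugate form}. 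Your route buys two things: no differentiation of the basis, hence no skew-Hermitian bookkeeping; and immunity to a sign/factor slip in the paper (the diagonal coefficient in (\ref{quantized flow 2}) should be $-\tfrac{k}{2}\log{\norm{s_i}^2_{b_k(H)}}$, and the paper's displayed chain ends with $\frac{d}{dt}L\circ f_k(H_t)=S_k(H)$, contradicting the stated $S_k(H)=-\frac{d}{dt}L\circ f_k(H_t)$, whereas your bookkeeping lands on the stated sign). What the paper's moving-frame computation buys is that it is the template reused in Proposition \ref{asymptotic monotonicity}, where the $X_{ij}$ terms genuinely enter. Two small additions would make your write-up airtight: note that the objective in Lemma \ref{convex conjugate form} is concave in $(\lambda_i)$ (linear term plus negative log-sum-exp), so the stationary point you exhibit really is the supremum; and note that reading the flow equation as ``velocity $=k\log{b_k(H)}$'' uses that the differential of the matrix logarithm at the identity is the identity map, which your choice of $H$-orthonormal frame makes available.
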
 
\begin{proof}
Let us take $H_t$-orthonormal and $b_k(H_t)$-orthogonal basis $s_i(t)$. 
Similaly to the previous proposition we compute 
\begin{align}
    \frac{d}{dt}L\circ f_k(H_t)
    &=\frac{d}{dt}\bigg\{
         -\log \frac{1}{V}\int_X e^{-f_k(H)}d\mu_0
         \bigg\}\\
    &= 2 \Re \int_X \frac{\dot{s}_i\bar{s}_i}{k\sum \abs{s_i}^2}
    d\mu_{f_k(H)}\\
 &=\frac{1}{kN_k}\sum_i  
 2\Re \int_X \dot{s}_i\bar{s}_i e^{-kf_k(H)} d\mu_{f_k(H)} \\
 &= \frac{1}{N_k}\sum_i \bigg\{ \norm{s_i}_{b_k(H)}^2\log{\norm{s_i}_{b_k(H)}^2}
 +b_k(H)(X_{ij}s_j, s_i) +b_k(H)(s_i, X_{ij}s_j)\bigg\}\\
 &= \sum_i \frac{\norm{s_i}_{b_k(H)}^2}{N_k}\log{\norm{s_i}_{b_k(H)}^2}\\
 &= S_k(H). 
\end{align} 
\end{proof} 

As an analogue of the K\"ahler-Ricci flow 
It is natural to expext monotonicity of the entropy alog the 
quantized flow. 
However, we have only asymptotic monototonicity. 
This will be not used in the subsequent discussion but we serve the proof 
for the interested readers. 

\begin{prop}\label{asymptotic monotonicity}
We fix $\phi_0 \in \cH$. 
Let $H_t$ be the quantized flow \ref{quantized flow}
starting from $H_0$. 
\begin{align}
    \frac{d}{dt}S_k(H_t) 
    \leq \frac{k+1}{N_k} S(H_t\vert b_k(H_t))^2.  
\end{align}
\end{prop}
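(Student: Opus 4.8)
The plan is to differentiate the spectral form of $S_k$ and reorganize the answer as a comparison of two variances. Fix $t$, set $\phi=f_k(H_t)$, and work in an $H_t$-orthonormal, $b_k(H_t)$-orthogonal basis $s_i=s_i(t)$; write $\beta_i=\norm{s_i}^2_{b_k(H)}$ and $v_i=\log\beta_i$. By Lemma \ref{convex conjugate form} one has $S_k(H)=\frac{1}{N_k}\sum_i\beta_i\log\beta_i$, and since $\sum_i\beta_i=N_k$ is constant along the flow (Lemma \ref{normalization lemma}) the derivative collapses to
\begin{equation}
\frac{d}{dt}S_k(H_t)=\frac{1}{N_k}\sum_i(\log\beta_i)\,\dot\beta_i .
\end{equation}
So everything reduces to computing $\dot\beta_i=\tfrac{d}{dt}\,b_k(H_t)(s_i(t),s_i(t))$.

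First I would split $\dot\beta_i$ into the motion of the frame and the motion of the form. The frame part $2\Re\,b_k(H)(\dot s_i,s_i)$ is governed by Lemma \ref{orthonormal-orthogonal}: only the Hermitian part of the connection coefficients survives the real part, contributing $-k\beta_i\log\beta_i$. The form part $\dot b_k(H)(s_i,s_i)$ I would compute exactly as in Lemma \ref{L-continuity}, differentiating
\begin{equation}
b_k(H)(s_i,s_i)=\frac{\int_X\abs{s_i}^2e^{-(k+1)f_k(H)}\,d\mu_0}{\int_X e^{-f_k(H)}\,d\mu_0}
\end{equation}
to produce two $(k+1)$-weighted integrals against $\dot\phi:=\tfrac{d}{dt}f_k(H_t)$. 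The last input is $\dot\phi$, found by differentiating the Bergman identity $\sum_i\abs{s_i}^2=N_k e^{kf_k(H)}$; again the skew-Hermitian gauge part drops out of the real part, leaving
\begin{equation}
\dot\phi=-\frac{\sum_j(\log\beta_j)\,\abs{s_j}^2}{\sum_l\abs{s_l}^2}.
\end{equation}

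The organizing device is the family of pointwise weights $w_i(x)=\abs{s_i(x)}^2e^{-kf_k(H)}/N_k$, which form a probability vector in $i$ for each $x$ (by the Bergman identity) and satisfy $\int_X w_i\,d\mu_\phi=\beta_i/N_k=:p_i$. Substituting the formulas above and contracting the two-point integrals via $\int_X\abs{s_i}^2\abs{s_j}^2e^{-2kf_k(H)}\,d\mu_\phi=N_k^2\int_X w_iw_j\,d\mu_\phi$, together with $\int_X\dot\phi\,d\mu_\phi=-S_k$ (Proposition \ref{S as slope along the flow}), I expect all terms to collect into conditional and marginal expectations of $v$. Writing $\bar v(x)=\sum_i v_iw_i(x)$, the law of total variance should give the clean identity
\begin{equation}
\frac{d}{dt}S_k(H_t)=\mathrm{Var}_{\mu_\phi}\!\big(\bar v\big)-k\,\mathbb{E}_{\mu_\phi}\!\big[\mathrm{Var}_{w(x)}(v)\big],
\end{equation}
which, since $\mathrm{Var}_{\mu_\phi}(\bar v)+\mathbb{E}_{\mu_\phi}[\mathrm{Var}_{w(x)}(v)]=\mathrm{Var}_p(v)$, is the same as $\mathrm{Var}_p(v)-(k+1)\,\mathbb{E}_{\mu_\phi}[\mathrm{Var}_{w(x)}(v)]$.

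The hard part will be the final inequality, and it is exactly where asymptotic rather than exact monotonicity is forced. One cannot simply drop the non-positive term: near a balanced form ($\beta_i\to1$) the two quantities $\mathrm{Var}_p(v)$ and $(k+1)\mathbb{E}[\mathrm{Var}_{w(x)}(v)]$ are of the same quadratic order in $\log\beta$, whereas the target $\frac{k+1}{N_k}S(H\vert b_k(H))^2$ is quartically small, because $S(H\vert b_k(H))=-\frac{1}{N_k}\sum_i\log\beta_i\geq0$ (arithmetic–geometric mean together with $\sum\beta_i=N_k$) is itself quadratically small. Hence the claim is equivalent to a sharp Poincaré-type estimate
\begin{equation}
\mathrm{Var}_p(v)\leq(k+1)\,\mathbb{E}_{\mu_\phi}\!\big[\mathrm{Var}_{w(x)}(v)\big]+\frac{k+1}{N_k}\,S\big(H\,\vert\,b_k(H)\big)^2,
\end{equation}
equivalently the matrix bound $(k+1)W-kP-pp^{\mathsf T}\preceq\frac{k+1}{N_k^{3}}\mathbf 1\mathbf 1^{\mathsf T}$ for the Gram matrix $W_{ij}=\int_X w_iw_j\,d\mu_\phi$ and $P=\mathrm{diag}(p_i)$, both of which kill $\mathbf 1$. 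I expect this spectral-gap step to be the entire difficulty: it needs genuine control of the off-diagonal overlaps $W_{ij}$ of the Bergman weights (a reproducing-kernel estimate), not merely the soft bound $W\preceq P$ that only expresses $\mathbb{E}[\mathrm{Var}_{w(x)}(v)]\geq0$. The factor $k+1$ is the very one produced by differentiating $b_k$, so matching it sharply is what makes the constant come out; the rest is bookkeeping.
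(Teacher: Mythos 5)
Your derivation is correct as far as it goes, and it is in substance the paper's own computation, reorganized: your three contributions to $\dot\beta_i$ (the frame motion via Lemma \ref{orthonormal-orthogonal}, the $(k+1)$-weighted numerator term, and the denominator term) are exactly the three displayed lines in the paper's proof, and your identity $\frac{d}{dt}S_k(H_t)=\mathrm{Var}_{\mu_\phi}(\bar v)-k\,\mathbb{E}_{\mu_\phi}[\mathrm{Var}_{w(x)}(v)]=\mathrm{Var}_p(v)-(k+1)\,\mathbb{E}_{\mu_\phi}[\mathrm{Var}_{w(x)}(v)]$ is a correct and cleaner repackaging of them (the paper's first line is the $-k\,\mathbb{E}_p[v^2]$-type term, up to a factor-of-two slip in Lemma \ref{orthonormal-orthogonal}; its third line is $-S_k^2$; its second line is $(k+1)\int_X\bar v^2\,d\mu_\phi$, modulo the ambient $1/N_k$ normalization the paper is loose about). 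The genuine gap is that you stop precisely where the proof must end: the ``sharp Poincar\'e-type estimate'' $\mathrm{Var}_p(v)\leq(k+1)\,\mathbb{E}_{\mu_\phi}[\mathrm{Var}_{w(x)}(v)]+\frac{k+1}{N_k}S(H\vert b_k(H))^2$ is conjectured, not proved, and you supply no reproducing-kernel input controlling the Gram matrix $W_{ij}=\int_X w_iw_j\,d\mu_\phi$. As written, your argument establishes an equivalence, not the proposition.

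For comparison, the paper attempts no spectral-gap analysis at all: it discards the two nonpositive terms and closes the argument in one line by asserting $\frac{k+1}{N_k}\sum_{i,j}\log B_i\log B_j\int_X\abs{s_i}^2\abs{s_j}^2e^{-2kf_k(H)}\,d\mu_{f_k(H)}\leq\frac{k+1}{N_k}\big(\sum_i\log B_i\big)^2$, which in your notation reads $\int_X\bar v^2\,d\mu_\phi\leq\big(\frac{1}{N_k}\sum_i v_i\big)^2$. Your own scaling observation tells against exactly this step: $\int_X\bar v^2\,d\mu_\phi\geq(\mathbb{E}_p[v])^2$ and is generically of quadratic order in $\log\beta$ near a balanced form, whereas $\big(\sum_i\log\beta_i\big)^2$ is quartic; concretely, for $N_k=2$, $\beta=(1+\epsilon,1-\epsilon)$ and $\abs{s_1}^2e^{-kf_k(H)}$ non-constant, the left side is $\sim\epsilon^2\int_X(a-1)^2\,d\mu_\phi$ while the right side is $O(\epsilon^4)$. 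So your blind attempt has correctly isolated the step on which the proposition hinges, and the paper's justification of that step is at best elliptical and, as stated, appears false in general. The verdict is therefore two-sided: your proposal is incomplete (the key inequality is left unproven, so it does not prove the proposition), but the reduction is sound, and your critique identifies a real defect in the paper's own proof rather than a trick you failed to find; an honest proof would require the off-diagonal control of $W_{ij}$ you describe, or a weakening of the stated bound.
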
 
\begin{proof}
Let us for a moment write $B_i := \norm{s_i}_{b_k(H)}^2$
 and $\dot{f}:=\frac{df}{dt}$.  
 Since $\sum B_i =N$ by Lemma \ref{normalization lemma}, 
 it is observed  
\begin{equation}
    \bigg(\sum B_i \log{B_i}\bigg)^{\cdot}
    =\dot{B_i}\log{B_i}. 
\end{equation}
We compute the differential $\dot{B}_i$ as 
\begin{align}
    \dot{B_i} 
    &=\bigg(\frac{\int_X \abs{s_i}^2 e^{-(k+1)f_k(H)}d\mu_0}{\int_X e^{-f_k(H)}d\mu_0}
      \bigg)^{\cdot} \\
    &=2\Re \int_X \dot{s}_i\bar{s}_i e^{-kf_k(H)} 
    d\mu_{f_k(H)}\\
    &\ \ \ \ \ \ \ \ -2\Re \int_X \abs{s_i}^2 \frac{k+1}{k} 
    \frac{\sum \dot{s}_j\bar{s}_j }{\sum \abs{s_j}^2} e^{-kf_k(H)} 
    d\mu_{f_k(H)}\\
    &\ \ \ \ \ \ \ \ +2\Re \int_X \abs{s_i}^2 e^{-kf_k(H)} d\mu_{f_k(H)} 
    \int_X \frac{\sum \dot{s}_j\bar{s}_j }{k\sum \abs{s_j}^2} d\mu_{f_k(H)}. 
\end{align}

Let us multiply $\log{B_i}$ to each line and 
take the summation in $i$. 
By Lemma \ref{orthonormal-orthogonal}, 
the first line is equivalent to 
\begin{equation}
 -2k \sum_i B_i (\log{B_i})^2
 +\sum_i (\sum_{p} B(X_{ip}s_p, s_i)+\sum_{q} B(s_{i}, X_{iq}s_q))\log{B_i}. 
\end{equation} 
Since $s_i$ is $B$-orthogonal 
this is equivalent to $-2k \sum B_i (\log{B_i})^2 \leq 0$ hence it has desirable sign. 
Similarly, the third line yields 
\begin{equation}
-\frac{1}{kN_k} \sum (B_i \log{B_i}) (B_j \log{B_j}) \leq0. 
\end{equation} 
The problem is about the second line which does not have  
the desirable sign. 
Since $\abs{s_i}^2 \leq \sum \abs{s_j}^2$, the term involving 
the skew-Hermitian matrix $X_{jp}$ is bounded from above by 
\begin{equation}
    -\frac{k+1}{k}\sum_{j, p} \int_X X_{jp} s_p \bar{s_j} e^{-kf_k(H)} d\mu_{f_k(H)}
\end{equation} 
which is again zero because $s_i$ are $B$-orthogonal. 
The term involving $\log{B_j}$ yields 
\begin{align}
    \frac{k+1}{N_k} \sum_{i, j}
    \int_X \log{B_i}\log{B_j}\abs{s_i}^2\abs{s_j}^2 
    e^{-2kf_k(H)} d\mu_{f_k(H)}
    \leq \frac{k+1}{N_k} \big(\sum \log{B_i}\big)^2. 
\end{align} 

We have $\sum B_i =N$  by Lemma \ref{normalization lemma} hence 
the inequality of arithmetic and geometric means
$\sum \log{B_i} \leq 0$. 
\end{proof}

\section{Convergence results}
\label{Convergence results}

In this section We prove Theorem \ref{Theorem A} and \ref{Theorem B}. 

\begin{thm}
    Let $\phi$ be a K\"ahler metric and $p_k(\phi)$ be 
    the associated Hermitian $L^2$-inner product 
    defined on the space of anti-canonical sections. 
    Then we have the convergence of the entropies: 
    \begin{equation}
    \lim_{k \to \infty}S_k(p_k(\phi)) =S(\phi). 
    \end{equation}
\end{thm}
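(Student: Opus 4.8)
The plan is to reduce the quantized entropy to a normalized trace of a Toeplitz operator and then combine the Bergman kernel expansion with the asymptotic Berezin--Toeplitz calculus. First I would put $H=p_k(\phi)$, choose a basis $s_i$ that is $H$-orthonormal and $b_k(H)$-orthogonal, and set $B_i=\norm{s_i}^2_{b_k(H)}$. By Lemma \ref{normalization lemma} we have $\sum B_i=N_k$, and evaluating the supremum of Lemma \ref{convex conjugate form} at its maximizer $\lambda_i=-k\log B_i$ gives the trace form
\begin{equation}
S_k(p_k(\phi))=\log N_k+\sum_i\frac{B_i}{N_k}\log\frac{B_i}{N_k}=\frac{1}{N_k}\sum_i B_i\log B_i=\frac{1}{N_k}\Tr\big(T_k\log T_k\big),
\end{equation}
where $T_k$ is the positive $H$-self-adjoint endomorphism of $H^0(X,kL)$ defined by $b_k(H)(\cdot,\cdot)=H(T_k\,\cdot,\cdot)$, whose eigenvalues are exactly the $B_i$. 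Since $f_k(H)=\b_k(\phi)$, one has $b_k(H)=p_k(\b_k(\phi))$, so for holomorphic sections
\begin{equation}
H(T_k s,s')=\int_X s\overline{s'}\,e^{-k\b_k(\phi)}d\mu_{\b_k(\phi)}=\int_X s\overline{s'}\,w_k\,e^{-k\phi}d\mu_\phi,\qquad w_k:=\frac{e^{-k\b_k(\phi)}d\mu_{\b_k(\phi)}}{e^{-k\phi}d\mu_\phi}.
\end{equation}
Thus $T_k=T_k^\phi(w_k)$ is the Toeplitz operator attached to the symbol $w_k$ on the Bergman space with the $H$-structure.

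Next I would identify the limiting symbol. Writing $\kappa_k=\sum_i\abs{s_i}^2e^{-k\phi}$ for the Bergman density of $p_k(\phi)$, the defining relation $e^{k\b_k(\phi)}=\frac{1}{N_k}\sum\abs{s_i}^2$ gives $e^{-k(\b_k(\phi)-\phi)}=N_k/\kappa_k$, while $d\mu_{\b_k(\phi)}/d\mu_\phi\to1$ uniformly because $\b_k(\phi)\to\phi$ in $C^\infty$. By the Tian--Yau--Zelditch expansion the Bergman measure equidistributes, $\kappa_k\,d\mu_\phi/N_k\to V^{-1}\omega_\phi^n$, equivalently $\kappa_k/N_k\to e^{-\rho}$ with $e^{\rho}=d\mu_\phi/(V^{-1}\omega_\phi^n)$; hence $w_k\to e^{\rho}$ uniformly (indeed in $C^\infty$), and $w_k$ stays in a fixed interval $[c,C]\subset(0,\infty)$. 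The same computation applied to the linear trace yields the density-of-states identity
\begin{equation}
\frac{1}{N_k}\Tr T_k^\phi(g)=\frac{1}{N_k}\int_X g\,\kappa_k\,d\mu_\phi\longrightarrow\int_X g\,V^{-1}\omega_\phi^n
\end{equation}
for any fixed continuous $g$; note that the reference measure here is the Liouville measure $V^{-1}\omega_\phi^n$, not $d\mu_\phi$, the discrepancy being absorbed by the Bergman density $\kappa_k$.

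Finally I would close the argument with the asymptotic Toeplitz calculus. The two standard facts (valid uniformly for symbols in a fixed $C^\infty$-bounded family with values in $[c,C]$, see \cite{Berm13}, \cite{Guedj12}) are the product expansion $T_k^\phi(g)T_k^\phi(h)=T_k^\phi(gh)+O(k^{-1})$ and the functional-calculus expansion $F(T_k^\phi(g))=T_k^\phi(F(g))+O(k^{-1})$ in operator norm. Taking $F=\log$ and combining,
\begin{equation}
T_k\log T_k=T_k^\phi(w_k)\,T_k^\phi(\log w_k)+O(k^{-1})=T_k^\phi\big(w_k\log w_k\big)+O(k^{-1})
\end{equation}
in operator norm, so that, using $\frac{1}{N_k}\abs{\Tr A}\leq\norm{A}_{\mathrm{op}}$ and $w_k\log w_k\to e^{\rho}\rho$ uniformly,
\begin{equation}
S_k(p_k(\phi))=\frac{1}{N_k}\Tr T_k^\phi\big(w_k\log w_k\big)+O(k^{-1})\longrightarrow\int_X e^{\rho}\rho\,V^{-1}\omega_\phi^n=\int_X\rho\,d\mu_\phi=S(\phi).
\end{equation}
The main obstacle is precisely this last step: establishing the product- and functional-calculus expansions for the \emph{weighted} Berezin--Toeplitz operators with $k$-dependent symbols, and verifying that the relevant density of states is the Liouville measure $V^{-1}\omega_\phi^n$ rather than $d\mu_\phi$. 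Everything else is routine: the $C^\infty$-convergences $\b_k(\phi)\to\phi$ and $w_k\to e^{\rho}$, and the perturbation estimate $\frac{1}{N_k}\abs{\Tr\big(F(T_k^\phi(w_k))-F(T_k^\phi(e^{\rho}))\big)}\to0$ coming from $\norm{T_k^\phi(w_k)-T_k^\phi(e^{\rho})}_{\mathrm{op}}\leq\norm{w_k-e^{\rho}}_{C^0}$, which also shows the argument is insensitive to whether one works with the $k$-dependent symbol $w_k$ or its limit.
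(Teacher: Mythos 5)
Your proposal is correct in outline, but it is a genuinely different argument from the paper's. The paper proves the theorem \emph{dynamically}: it runs the quantized flow from $H_0=p_k(\phi)$ and the K\"ahler--Ricci flow from $\phi$, identifies both entropies as slopes of $L$-functionals, namely $S_k(H_t)=-\frac{d}{dt}L\circ f_k(H_t)$ (Proposition \ref{S as slope along the flow}) and $S(\phi_t)=-\frac{d}{dt}L(\phi_t)$, then invokes Theorem \ref{quantized flow vs KRF} to get uniform convergence $L\circ f_k(H_t)\to L(\phi_t)$ on $[0,T)$ and concludes by the elementary fact that uniform convergence of differentiable convex functions forces convergence of the derivatives at $t=0$. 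You instead work \emph{statically} at the fixed metric $\phi$: you recast $S_k(p_k(\phi))$ as the normalized trace $\frac{1}{N_k}\Tr(T_k\log T_k)$ of the Toeplitz operator with symbol $w_k$, use the Tian--Yau--Zelditch expansion (valid for the smooth positive volume form $d\mu_\phi$) to get $w_k\to e^{\rho}$ with uniform two-sided bounds, and finish with a Szeg\H{o}-type limit theorem via the Berezin--Toeplitz product and functional-calculus expansions. Your computations check out: $b_k(H)=p_k(\b_k(\phi))$, the eigenvalues of $T_k$ are the $B_i$, the density of states is indeed $V^{-1}\omega_\phi^n$ (the Bergman density $\kappa_k/N_k\to e^{-\rho}$ absorbs the discrepancy with $d\mu_\phi$), and the limit $\int_X e^{\rho}\rho\,V^{-1}\omega_\phi^n=S(\phi)$ matches (\ref{entropy}); note also that you silently insert the factor $1/N_k$ into $S_k$, which is the normalization the paper actually uses in Lemma \ref{convex conjugate form} even though its displayed definition of $S(A\vert B)$ omits it, and it is the right one for the statement. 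What your route buys: it needs no flow at all, is quantitative (an $O(k^{-1})$ rate, or at least $o(1)$ if one extends the functional calculus from polynomials by approximation), and it sidesteps a soft spot in the paper's own proof --- the convexity of $t\mapsto -L\circ f_k(H_t)$ \emph{along the quantized flow}, which amounts to monotonicity of $S_k$ there, is exactly what Proposition \ref{asymptotic monotonicity} fails to establish exactly. What it costs: you must import the weighted Berezin--Toeplitz calculus with $k$-dependent symbols, which you correctly flag as the main obstacle; this is standard (Bordemann--Meinrenken--Schlichenmaier, Ma--Marinescu, and the Toeplitz estimates in \cite{Berm13}), and your reduction to the fixed symbol $e^{\rho}$ via $\norm{T_k^\phi(w_k)-T_k^\phi(e^{\rho})}_{\mathrm{op}}\leq\norm{w_k-e^{\rho}}_{C^0}$ --- together with the Hilbert--Schmidt bound $\frac{1}{N_k}\abs{\Tr(F(A)-F(B))}\leq L_F\norm{A-B}_{\mathrm{op}}$ for $F$ Lipschitz on a fixed spectral interval $[c,C]\subset(0,\infty)$ --- does make the $k$-dependence of the symbol harmless, so the gap is closable by citation rather than by new ideas.
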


\begin{proof}
    Let us utilize the quantized flow emanating from $H_0:= p_k(\phi)$. 
Recall that Proposition \ref{S as slope along the flow} tells 
    \begin{equation}
        S_k(H_t)=-  \frac{d}{dt} L\circ f_k(H_t). 
    \end{equation}
On the other hand, along the normalized K\"ahler-Ricci flow 
emanating from $\phi$ 
we observe 
\begin{align}
    S(\phi_t)
    &= \int_X \rho_{t} d\mu_{\phi_t} 
    =-\int_X \dot{\phi}_t d\mu_{\phi_t}\\
    &=-\frac{d}{dt}L(\phi_t). 
\end{align} 
Keeping $H_t \sim p_k(\phi_t)$ in mind, it is sufficient to show the 
convergene of the slope 
\begin{equation}
    \frac{d}{dt} L\circ f_k(H_t) \to \frac{d}{dt}L(\phi_t).    
\end{equation}   
Now we apply Theorem \ref{quantized flow vs KRF} in this situation. 
It follows that the energy functional itself has well-approximatition 
\begin{equation}
L\circ f_k(H_t) =  L(H_t) + o(\frac{1}{k}). 
\end{equation}
If we fix $T$, the convergence  
is unifom in $t \in [0, T)$. 
Now the proof is completed as soon as one notice the general fact that 
uniform convergence of continuously differentiable convex functions $g_k \to g$ on the interval 
$[0, T)$
impies $g_k'(0) \to g_k'(0)$. 
For the readear's convenience we surve a proof of this fact. 
Fix any $t>0$ and take arbitrary $\e>0$. 
If $k$ is sufficiently large against $\e t$ we have 
\begin{equation}
    \frac{g(t)-g(0)}{t} \leq  \frac{g_k(t)-g_k(0)}{t} +\frac{\e t}{t}
\end{equation} 
therefore 
\begin{align}
    g'(0) &= \inf_{t>0}  \frac{g(t)-g(0)}{t} \\
    &\leq  \frac{g_k(t)-g_k(0)}{t} +\e 
    =g_k'(s) +\e 
\end{align}
holds for some $s \in [0, t]$. 
It follows $g'(0) \leq \liminf_{k\to \infty} g_k'(0)$. 
The same argument provides the converse inequality. 
\end{proof} 

\begin{cor}
If the anti-canonical polarization $(X, -K_X)$ admits a K\"ahler-Einstein metric 
$\phi_{\KE}$, 
then $p_k(\phi_{\KE})$ is asymptotically balanced in the sense that 
\begin{equation}
\lim_{k \to \infty} S_k(p_k(\phi_{\KE})) =0 
\end{equation}
holds. 
\end{cor}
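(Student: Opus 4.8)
The plan is to deduce this immediately from the convergence result just established, together with the characterization of the zero locus of the classical entropy. First I would apply the preceding theorem (Theorem \ref{Theorem B}) to the particular metric $\phi = \phi_{\KE}$, which yields
\begin{equation}
\lim_{k\to\infty} S_k(p_k(\phi_{\KE})) = S(\phi_{\KE}).
\end{equation}
Thus the entire assertion reduces to the single classical claim that $S(\phi_{\KE}) = 0$.

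To verify this, I would recall that a K\"ahler-Einstein metric satisfies $\Ric\omega = \omega$, so its Ricci potential obeys $\dd\rho = \Ric\omega - \omega = 0$; since $X$ is connected, $\rho$ must be a constant. Feeding this into the normalization (\ref{Ricci potential}), that is $\int_X (e^\rho - 1)\omega^n = 0$, forces $e^\rho = 1$ and hence $\rho \equiv 0$. The defining formula (\ref{entropy}) then gives $S(\phi_{\KE}) = \int_X \rho e^\rho \omega^n = 0$. Equivalently, one may simply quote the remark recorded after (\ref{entropy}) that $S(\omega) = 0$ if and only if $\omega$ is K\"ahler-Einstein, which makes the vanishing immediate.

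In this argument there is essentially no obstacle to overcome: all of the genuine analytic difficulty — the Bergman-kernel-type comparison relating the finite-dimensional entropy $S_k$ to its classical counterpart $S$ uniformly in $t$ — has already been absorbed into the convergence theorem. The corollary is therefore a purely formal consequence, and the only point worth flagging is that the limit in the corollary should not be expected to hold at a finite-$k$ level (the metrics $p_k(\phi_{\KE})$ need not be exactly balanced); it is only the asymptotic vanishing of $S_k$ that is being claimed, which is exactly what the displayed limit records.
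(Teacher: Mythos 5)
Your proof is correct and follows exactly the route the paper intends: the corollary is an immediate consequence of Theorem B applied to $\phi = \phi_{\KE}$, combined with the classical fact (already recorded after (\ref{entropy})) that $S(\omega)=0$ precisely for K\"ahler-Einstein metrics, which you rederive correctly from $\rho$ being constant and the normalization (\ref{Ricci potential}). The paper offers no separate proof for this corollary because it regards it as formal in just this way, so your argument matches it.
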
 

In the same situation \cite{BBEGZ11} shows that 
$\cH_k$ admits an anti-canonical balanced form $H_{\bal, k}$ and
 $\lim_{k \to \infty} f_k(H_{\bal, k})=\phi_{\KE}$. 
The above corollary immediately recovers their result. 
Notice that $p_k(\phi_{\KE}) \neq H_{\bal, k}$ in general. 

\section{Optimal non-Archimedean norm for the quantized entropy}

\subsection{Non-Archimedean norms}
\label{Non-Archimedean norms}
For terminologies about norms and filtrations, we refer \cite{BHJ17}, section $1$. 

Recall that the non-Archimedean norm $\nu$ on the finiite-dimenstional 
vector space $V$ is a function $\nu \colon V \to \R_{\geq 0}$ such that 
\begin{itemize}
\item[$(1)$] \ $\nu(s)=0$ if and only if $s=0$.  
\item[$(2)$] \ $\nu(a \cdot s) =\nu(s)$  holds for any $a \in \C^*$, $s \in V$, and  
\item[$(3)$] \ $\nu(s+s') \leq \max \{ \nu(s), \nu(s')\} $ holds for any $s, s' \in V$. 
\end{itemize}
We denote by $\cH_k^\NA$ the set of non-Archimedean norms on $H^0(X, kL)$. 
The associated filtration 
\begin{equation}\label{filtration}
F^\lambda H^0(X, kL)
=\bigg\{ s \in H^0(X, kL): \nu(s) \leq e^{-\lambda} \bigg\} 
\end{equation}
for $k \in \Z_{\geq 0}, \lambda \in \R$ 
produces weights $\lambda_1, \lambda_2, \dots, \lambda_{N_k}$ and vectors $s_1, s_2, \dots, s_{N_k}$ describing the jump of dimensions. In this way each element of $\cH_k^\NA$ can be seen as a direction pointed to by a geodesic ray on $\cH_k$. 
Let $\cH^\NA$ be the space of K\"ahler metrics on the Berkovich analytification $(X^\NA, L^\NA)$. 
For the background materials bout Berkovich analytifictions see \cite{BHJ17}, section $6$. 
The space $\cH_k^\NA$ gives the quantization of $\cH^\NA$ 
by the non-Archimedean Fubini-Study map 
\begin{equation}
f_k \colon \cH_k^\NA \to \cH^\NA.  
\end{equation}
Basically in the non-Archimedean setting we should take the maximum in place of the square sum so that $f_k$ is defined to be 
\begin{equation}
f_k(\nu) = \max_s \frac{1}{k}\log \frac{\abs{s}^2}{\nu(s)} \in \cH^\NA. 
\end{equation}
Here $\abs{s}$ is a function on $X^\NA$, which satisfies $\abs{s(v)}^2=e^{-v(s)}$ for any quasi-monomial valuation $v \in X^\NA$.  

Any equivalent class of test configuration defines the element of $\cH^\NA$. 
In fact the filtration 
\begin{equation}
F_{(\cX, \cL)}^\lambda H^0(X, kL)
=\bigg\{ s \in H^0(X, kL): \text{ $\tau^{-\lambda}s$ extends holomorphically to $\cX$. } \bigg\} 
\end{equation}
associated with the test configuration $(\cX, \cL)$ defines $\nu_k \in \cH_k^\NA$. 
If denotes the Gauss extension for the valuation $v$ by $G(v)$, one can see that 
\begin{equation}
f_k(\nu_k) (v) = -\frac{1}{k} G(v)(k\cL )
\end{equation} 
holds. 
Here $G(v)(k\cL )$ denotes the 
valuation for a general section of the line bundle $k\cL$. 
The right-hand side is in fact independent of $k$ and hence defines $\nu_{(\cX, \cL)} \in \cH^\NA$. 
Conversely, for any non-Archimdean norm $\nu \in \cH_k^\NA$ 
the image of the Fubini-study map $f_k(\nu)$
 is represented by a test configuration 
 so that we may take $L^\NA(f_k(\nu))$ ;which was defined immediately after  
 Definition \ref{definition of S-stability}.

\begin{dfn}
Let $F_k^\NA(\nu) = -\log \frac{1}{N_k} \sum_i e^{-\frac{\lambda_i}{k}}$.  
Define the non-Archimedean version of the quantized entropy as  
\begin{equation}
S_k^\NA= L^\NA \circ f_k -F_k^\NA. 
\end{equation}
\end{dfn} 

\begin{thm}
We have the quantized version of the equality (\ref{optimal degeneration}) so that   
\begin{equation}
\inf_{H \in \cH_k} S_k(H) = \max_{\nu \in \cH_k^\NA} (-S_k^\NA(\nu))
\end{equation}
holds for any fixed $k \in \N$. 
\end{thm}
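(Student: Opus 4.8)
The plan is to realise both sides as the two values of one minimax problem. For $H\in\cH_k$ and $\nu\in\cH_k^\NA$, let $H_t$ be the geodesic ray issuing from $H$ in the direction determined by $\nu$ — that is, with an $H$-orthonormal basis $s_i$ adapted to the filtration (\ref{filtration}) and weights $\lambda_i$ — and set
\[
\Phi(H,\nu):=-\frac{d}{dt}\Big|_{t=0}L\circ f_k(H_t)+F_k^\NA(\nu).
\]
First I would prove the representation $S_k(H)=\sup_{\nu}\Phi(H,\nu)$. For a fixed flag the inner supremum over the weights $\lambda_i$ is exactly the Lagrange computation of Lemma \ref{convex conjugate form}, with value $\log N_k+\sum_i\frac{d_i}{N_k}\log\frac{d_i}{N_k}$ where $d_i=\norm{s_i}^2_{b_k(H)}$. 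Since $\sum_i d_i=N_k$ by Lemma \ref{normalization lemma} and $(d_i)$ is majorised by the spectrum of $b_k(H)$ (Schur--Horn), the Schur-convex entropy is maximised over all $H$-orthonormal flags precisely at the $b_k(H)$-eigenflag, where its value is $S_k(H)$. This promotes the inequality of Proposition \ref{S bounds minimal slope} to an identity.

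Next I would establish $-S_k^\NA(\nu)=\inf_{H}\Phi(H,\nu)$. The slope formula of \cite{Berm16} identifies $L^\NA(f_k(\nu))=\lim_{t\to\infty}\frac{d}{dt}L\circ f_k(H_t)$ as the asymptotic slope along any ray toward $\nu$, and because $L\circ f_k$ is geodesically convex on $\cH_k$ this slope is nondecreasing in $t$. Hence the initial slope never exceeds the asymptotic one, giving $\Phi(H,\nu)\geq -S_k^\NA(\nu)$ for every $H$; letting $H=H_t$ travel out along the ray toward $\nu$ drives the initial slope up to its limit and forces equality in the infimum. With both representations in hand the generic weak-duality inequality $\inf_H\sup_\nu\Phi\geq\sup_\nu\inf_H\Phi$ is automatic and yields the easy half,
\[
\inf_{H\in\cH_k}S_k(H)\;\geq\;\max_{\nu\in\cH_k^\NA}\bigl(-S_k^\NA(\nu)\bigr).
\]

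The substance is the reverse inequality together with attainment of the maximiser, i.e.\ the vanishing of the duality gap. As in the Archimedean theory the entropy $S_k$ is not geodesically convex on $\cH_k$ (this is the source of the merely asymptotic monotonicity in Proposition \ref{asymptotic monotonicity}), so a straight convex--concave minimax such as Sion's theorem is unavailable and I would argue directly. Taking a minimising sequence $H_j$ for $S_k$ in the $\CAT(0)$ space $\cH_k$ (or running the quantised flow and using Proposition \ref{S as slope along the flow}), either $(H_j)$ stays bounded and converges to a balanced form, whence both sides vanish and the optimal $\nu$ is the trivial norm, or $(H_j)$ escapes to infinity and subconverges to a boundary point which, by the identification of subsection \ref{Non-Archimedean norms}, is an honest norm $\nu^\ast\in\cH_k^\NA$. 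The crux — the finite-dimensional shadow of the hard optimal-degeneration theorem — is to show that along this escaping sequence the convexity bound (initial slope) $\leq$ (asymptotic slope) saturates in the limit, equivalently that $L\circ f_k$ becomes asymptotically affine in the escape direction, so that $S_k(H_j)\to -S_k^\NA(\nu^\ast)$ and $\nu^\ast$ realises the maximum. Controlling this saturation, and with it the semicontinuity of $\Phi$ as the basepoint runs to the boundary of $\cH_k$, is where I expect the real difficulty to lie.
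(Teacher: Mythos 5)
Your first half is correct and in one respect sharper than the paper's. The representation $S_k(H)=\sup_\nu \Phi(H,\nu)$ via the Lagrange computation of Lemma \ref{convex conjugate form} together with Schur--Horn majorization is exactly what is needed to pass from a filtration-adapted $H$-orthonormal basis (which need not be $b_k(H)$-orthogonal) to the bound $-S_k^\NA(\nu)\leq S_k(H)$; at this point the paper merely asserts that ``the diagonalization procedure can be compatible with the filtration,'' and your Schur-convexity argument is a cleaner justification of that step. Your identity $-S_k^\NA(\nu)=\inf_H\Phi(H,\nu)$ and the resulting weak-duality inequality $\inf_{H}S_k(H)\geq\max_{\nu}(-S_k^\NA(\nu))$ likewise agree with the paper's first half.

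The genuine gap is the reverse inequality, which you explicitly leave open (``where I expect the real difficulty to lie''). The paper does not compactify $\cH_k$, extract a boundary norm $\nu^\ast$ from an escaping minimizing sequence, or prove asymptotic affineness of $L\circ f_k$; none of that machinery appears. Instead it runs the quantized flow (\ref{quantized flow}) and, at each time $t_j$, constructs the candidate norm explicitly: take the $H_{t_j}$-orthonormal, $b_k(H_{t_j})$-orthogonal basis $s_i$ and declare the weights to be $\lambda_i=-k\log\norm{s_i}^2_{b_k(H_{t_j})}$. With this choice Lemma \ref{normalization lemma} forces the subtracted term to vanish, $F_k^\NA(\nu_j)=-\log\frac{1}{N_k}\sum e^{-\lambda_i/k}=0$, and Proposition \ref{S as slope along the flow} converts the instantaneous slope of $L\circ f_k$ along the flow into the exact identity $S_k(H_{t_j})=-S_k^\NA(\nu_j)$, whence $\inf_H S_k(H)\leq S_k(H_{t_j})=-S_k^\NA(\nu_j)\leq\max_\nu(-S_k^\NA(\nu))$. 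So the idea missing from your outline is that the flow direction itself canonically determines a non-Archimedean norm whose weights are read off from $b_k$, making the maximizing sequence explicit rather than the output of a boundary-compactness argument. (To be fair to your instinct: the paper's displayed equality silently identifies $L^\NA\circ f_k(\nu_j)$ with the instantaneous rather than the asymptotic slope, which is precisely the saturation issue you flagged, and the paper treats it very briefly; but the construction of $\nu_j$ from the flow is the paper's route to the hard inequality, and your proposal as written establishes only one of the two inequalities.)
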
 

\begin{proof}
 Let us first take any $\nu \in \cH_k^\NA$ to show 
 the one-sided inequality. 
Via the filtration (\ref{filtration}), the norm $\nu$ is reperesened by some basis $s_i$ and weights $\lambda_i$. 
 Along the geodesic $H_t$ such that  
 $e^{\frac{\lambda_i}{2}t}s_i$ is $H_t$-orthonormal, convexity 
 of the $L$-functional 
 implies 
 \begin{align}
   -L^\NA\circ f_k (\nu) &= \lim_{t \to \infty} -\frac{L\circ f_k(H_t)}{t}\\
   &\leq -\frac{d}{dt}\bigg\vert_{t=0} \frac{L\circ f_k(H_t)}{t}
   = -\sum \frac{\lambda_i}{k} \frac{\norm{s_i}_{b_k(H_0)}^2}{N_k}. 
 \end{align}
It follows 
 \begin{equation}
    -S_k^\NA(\nu) \leq 
    -\sum \frac{\lambda_i}{k} \frac{\norm{s_i}_{b_k(H_0)}^2}{N_k}
    - \log{\frac{1}{N_k}\sum e^{-\frac{\lambda_i}{k}}}. 
 \end{equation} 
Notice that $H_t$ is adopted after the filtration and $\lambda_i$ are fixed, 
so in the above computations $s_i$ can be assumed 
$H_0$-orthonoemal and $b_k(H_0)$-orthogonal. 
Indeed the diagonalization procedere can be compatible 
with respect to the filtration $F^\lambda H^0(X, kL)$. 
Hence we conclude $-S_k^\NA(\nu) \leq S_k(H_0)$ 
with Lemma \ref{convex conjugate form}. 

To show the opposite inequality, 
we will exploit the quantized flow $H_t$. 
Let us take a sequence $t_j \to \infty$ and 
the non-Archimedean norm $\nu_j$ represented by 
the $H_{t_j}$-orthonormal basis $s_i$ and the weights 
$\lambda_i= -k\log\norm{s_i}_{b_k(H_{t_j})}^2$. 
The quantized entropy is computed 
by Proposition \ref{S as slope along the flow} as 
\begin{align}
    S_k(H_{t_j}) 
    = -\frac{d}{dt} \bigg\vert_{t=t_j}L\circ f_k(H_t)
&=-\sum \frac{\lambda_i}{k} \frac{\norm{s_i}_{b_k(H_{t_j})}^2}{N_k}
- \log{\frac{1}{N_k}\sum e^{-\frac{\lambda_i}{k}}}\\
&=-S_k^\NA(\nu_j). 
\end{align}
Consequently, we obtain 
$S_k(H_{t_j})\leq -S_k^\NA(\nu_j)$. 
\end{proof} 

\begin{cor}
If there exists a balanced Hermitian inner product 
then $S_k^\NA(\nu)  \geq 0$ for all $\nu \in \cH_k^\NA$. 
\end{cor}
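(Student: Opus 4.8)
The plan is to read this off directly from the minimax equality just established, once we exhibit a single point at which the quantized entropy vanishes. First I would record that a balanced inner product $H$ is by definition one for which $b_k(H)=H$, so that
\begin{equation}
S_k(H) = S(b_k(H) \vert H) = \Tr b_k(H)(\log b_k(H) - \log H) = 0.
\end{equation}
Since the quantized entropy $S_k$ is nonnegative throughout $\cH_k$ --- this being the nonnegativity of quantum relative entropy recorded when $S_k$ was introduced --- the existence of even one balanced $H$ forces the infimum to sit exactly at its lower bound:
\begin{equation}
\inf_{H \in \cH_k} S_k(H) = 0.
\end{equation}

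Next I would feed this into the minimax identity $\inf_{H \in \cH_k} S_k(H) = \max_{\nu \in \cH_k^\NA}(-S_k^\NA(\nu))$ proved just above. The vanishing of the left-hand side yields $\max_{\nu \in \cH_k^\NA}(-S_k^\NA(\nu)) = 0$, and since the maximum is an upper bound for each individual term we obtain $-S_k^\NA(\nu) \leq 0$, that is $S_k^\NA(\nu) \geq 0$, for every $\nu \in \cH_k^\NA$. This is exactly the asserted inequality.

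There is essentially no obstacle here: the entire content is carried by the minimax theorem together with the observation that the balanced locus is precisely where $S_k$ attains the value zero. The one point worth stating explicitly is that nonnegativity of $S_k$ is what upgrades ``the value $0$ is attained'' to ``the infimum equals $0$''; without it a balanced form would only certify $\inf_{H \in \cH_k} S_k(H) \leq 0$, which would not be enough to conclude. Given the preceding results, no further estimates are needed.
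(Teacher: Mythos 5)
Your proof is correct and is essentially the paper's own argument: the paper states this corollary without proof as an immediate consequence of the preceding theorem, via exactly your chain (balanced $H$ means $b_k(H)=H$, so $S_k(H)=S(H\vert H)=0$, hence $\inf_{H\in\cH_k}S_k(H)=0$, and the minimax identity then gives $-S_k^\NA(\nu)\leq 0$ for every $\nu\in\cH_k^\NA$). One inaccuracy in your closing remark, though: nonnegativity of $S_k$ is \emph{not} actually needed here, since the weaker certificate $\inf_{H\in\cH_k}S_k(H)\leq 0$ already suffices --- feeding it through the identity gives $-S_k^\NA(\nu)\leq \max_{\nu'\in\cH_k^\NA}\bigl(-S_k^\NA(\nu')\bigr)=\inf_{H\in\cH_k}S_k(H)\leq 0$ for every $\nu$, so the conclusion only requires an upper bound on the infimum (the lower bound $S_k\geq 0$ would instead be what one needs for the converse direction, producing a $\nu$ with $S_k^\NA(\nu)\leq 0$ from instability).
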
 
This is similar to the result of \cite{ST19}. 
They showed that existence of the anti-canonical balanced metric implies 
``F-stability''. 
The F-stability requires for any test configuration 
of exponent $k$ 
positivity of the non-Archimedean Ding functional 
$D_k^\NA:= L^\NA \circ f_k -E_k$ plus the positive constant $q$ 
which reflects multiplicity of the central fiber.


\begin{thebibliography}{99.}%
    %
    %
    
    
    
    
    
    
    
    \bibitem{Berm13}R. Berman:  
       \newblock {\em Relative K\"ahler-Ricci flows and their quantization.}
       \newblock Anal. PDE. \textbf{6} (2013), 131--180. 
       
    \bibitem[B13]{Berm13b}R.~J. Berman: 
       \newblock \emph{A thermodynamical formalism for Monge-Amp\`ere equations, Moser-Trudinger inequalities and K\"ahler-Einstein metrics}. 
       \newblock  Adv. Math. \textbf{248} (2013), 1254--1297. 	
    
    
    \bibitem{Berm16}R.~J. Berman: 
       \newblock \emph{K-polystability of Q-Fano varieties admitting Kahler--Einstein metrics}. 
       \newblock  Invent. Math. \textbf{203} (2016), no. 3, 973--1025. 
    
    \bibitem[B18]{Berm18}R.~J. Berman: 
       \newblock \emph{K\"ahler-Einstein metrics, canonical random point processes and birational geometry}. 
       \newblock Algebraic geometry: Salt Lake City 2015, 29--73, Proc. Sympos. Pure Math., 97.1, Amer. Math. Soc., Providence, RI, 2018. 
    
    \bibitem[B20]{Berm20}R.~J. Berman: 
       \newblock \emph{An invitation to K\"ahler-Einstein metrics and random point processes}. 
       \newblock To appear in Surveys in Differential Geometry. 

    
    
    
    \bibitem{BBGZ13}R.~J. Berman, S. Boucksom, V. Guedj, and A. Zeriahi: 
         \newblock \emph{A variational approach to complex Monge-Amp\`ere equations}. 
         \newblock Publ. Math. Inst. Hautes \'Etudes Sci. \textbf{117} (2013), 179--245.
    
    \bibitem{BBEGZ11}R.~J. Berman, S. Boucksom, P. Eyssidieux, V. Guedj, and A. Zeriahi: 
       \newblock \emph{K\"ahler-Einstein metrics and the K\"ahler-Ricci flow on log Fano varieties}. 
       \newblock to appear in J. Reine Angew. Math., \texttt{arXiv:1111.7158}.  
    
    \bibitem{BBJ15}R.~J. Berman, S. Boucksom, and M. Jonsson: 
        \newblock \emph{A variational approach to the Yau-Tian-Donaldson conjecture}. 
        \newblock J. Amer. Math. Soc. \textbf{34} (2021), 605--652
        
    
    
    
    
    
    \bibitem{BHJ17}
    S. Boucksom T. Hisamoto and M. Jonsson: 
    \newblock \emph{Uniform K-stability, Duistermaat-Heckman measures and singularities of pairs}. 
    \newblock Ann. Inst. Fourier (Grenoble) \textbf{67} no. 2 (2017), 743--841. 
    
    \bibitem{BHJ19}
    S. Boucksom T. Hisamoto and M. Jonsson: 
    \newblock \emph{Uniform K-stability and asymptotics of energy functionals in K\"ahler geometry}. 
    \newblock J. Eur. Math. Soc.. \textbf{21} no. 9 (2019), 2905--2944
    
    
    
    
    \bibitem{BLXZ21}H.~Blum, Y.~Liu, C.~Xu, and Z.~Zhuang: 
     \newblock \emph{The existence of the K\"ahler-Ricci soliton degeneration.}
     \newblock to appear in Forum of Mathematics, Pi. 
    
    \bibitem{BLZ19}H.~Blum, Y.~Liu, and C.~Zhou: 
     \newblock \emph{Optimal destabilization of K-unstable Fano varieties via stability thresholds.}
     \newblock to appear in Geom. Topol. \texttt{arXiv:1907.05399}
    
    \bibitem{Cao85}H.D.Cao: 
     \newblock \emph{Deformation of K\"ahler metrics to K\"ahler-Einstein metrics on compact K\"ahler manifolds.}
     \newblock Invent. Math. \textbf{81} (1985), no. 2, 359--372.
     
    
    \bibitem[C00b]{Chen00b}X. Chen: 
            \newblock \emph{On the lower bound of the Mabuchi energy and its application}.                    
            \newblock Int. Math. Res. Not. {\bf 4} (2000), no. 12, 607--623. 
    
    \bibitem{CDS15}X. Chen, S.~K. Donaldson and S. Sun: 
    \newblock\emph{Kahler-Einstein metrics on Fano manifolds, I, II, III}.
    \newblock J. Amer. Math. Soc. \textbf{28} (2015), 183--278. 
    
    \bibitem{CHT17}T.~C. Collins, T. Hisamoto, R. Takahashi: 
     \newblock\emph{The inverse Monge-Amp\`ere flow and applications to K\"ahler-Einstein metrics}. 
     \newblock J. Differential Geom. \textbf{120} (2022), 51--95. 
    
    
    \bibitem{CSW15}X. Chen, S. Sun, and B. Wang: 
       \newblock\emph{K\"ahler-Ricci flow, K\"ahler-Einstein metric, and K-stability}.
        \newblock Geom. Topol. \textbf{22} (2018), no. 6, 3145--3173.
    
    
    
    \bibitem{CTW17}J. Chu, V. Tossatti and B. Weikove: 
        \newblock \emph{ On the Regularity of Geodesics in the Space of K\"ahler Metrics}. 
        \newblock Annals od PDE \textbf{3} (2017) no. 2.  
    
    \bibitem{CTW18}J. Chu, V. Tossatti and B. Weikove: 
        \newblock \emph{ $C^{1,1}$-regularity for degenerate complex Monge-Amp\`ere equations and geodesic rays}. 
        \newblock Communications in Partual Differential Equations \textbf{43} (2018), no. 2, 292--312. 
    
    \bibitem{Dar15}T. Darvas: 
        \newblock \emph{The Mabuchi geometry of finite energy classes}. 
        \newblock  Adv. Math. \textbf{285} (2015), 182--219.
    
    \bibitem{Dar17a}T. Darvas: 
        \newblock \emph{Weak geodesic rays in the space of K\"ahler potentials and the class $\cE(X,\omega)$}. 
        \newblock  J. Inst. Math. Jussieu \textbf{16} (2017), no. 4, 837--858. 
    
    \bibitem{Dar17b}T.~Darvas: 
        \newblock \emph{The Mabuchi completion of the space of K\"ahler potentials}. 
        \newblock  Amer. J. Math. \textbf{139} (2017), no. 5, 1275--1313. 
    
    \bibitem{DH17}T. Darvas and W. He: 
        \newblock \emph{Geodesic rays and K\"ahler-Ricci trajectories on Fano manifolds}. 
        \newblock Trans. Amer. Math. Soc. \textbf{369} (2017), no. 7, 5069--5085.
    
    
        
    \bibitem{DS16}V. Datar and G. Sz\'ekelyhidi: 
       \newblock \emph{K\"ahler-Einstein metrics along the smooth continuity method}.  
       \newblock Geom. Funct. Anal. \textbf{26} (2016), no. 4, 975--1010.    
    
    
    
    
    \bibitem{DS17}R. Dervan, G. Sz\'ekelyhidi: 
           \newblock \emph{The K\"ahler-Ricci flow and optimal degenerations}. 
           \newblock J. Differential Geom. \textbf{116} (2020), 187--203. 
           
    \bibitem{Din88} W. Ding:
        \newblock \emph{Remarks on the existence problem of positive K\"ahler-Einstein metrics}.
        \newblock Math. Ann. {\bf 282} (1988), no. 3, 463--471.
     
     
     
    \bibitem{Don01}S.~Donaldson: 
    \newblock \emph{Scalar curvature and projective embeddings, I}. 
    \newblock J. Differential Geom. {\bf 59} (2001) 479--522. 
     
    \bibitem{Don02}S.~K. Donaldson: 
       \newblock \emph{Scalar curvature and stability of toric varieties}. 
       \newblock  J. Differential Geom. \textbf{62} (2002), no. 2, 289--349.  
     
    \bibitem{Don05}S.~K. Donaldson: 
       \newblock \emph{Lower bounds on the Calabi functional}. 
       \newblock  J. Differential Geom. \textbf{70} (2005), no. 3, 453--472.
    
    
    
    \bibitem{FZ19}P. Flurin and S. Zelditch: 
        \newblock \emph{Entropy of Bergman measures of a toric K\"ahler manifold}. 
        \newblock Pure Appl. Math. Q. \textbf{18} (2022), 269--303. 

    \bibitem{Fut83} A. Futaki:
        \newblock \emph{An obstruction to the existence of Einstein K\"ahler metrics}.
        \newblock Invent. Math. {\bf 73} (1983), no. 3, 437--443.
      
    \bibitem{GZ17} V. Guedj et al.: 
        \newblock \emph{Complex Monge-Amp\`ere equations and geodesics in the space of K\"ahler metrics}.
        \newblock EMS Tracts in Mathematics. {\bf 26} (2017).      
      
      
    \bibitem{Guedj12} V. Guedj and A. Zeriahi: 
        \newblock \emph{Degenerate complex Monge-Amp\`ere equations}.
        \newblock Lecture Notes in Mathematics. \textbf{2038} (2012).    
    
      
    \bibitem{HL20}J.~Han and C.~Li
     \newblock \emph{Algebraic uniqueness of K\"ahler-Ricci flow limits and optimal degenerations of Fano varieties.}
     \newblock to appear in Geometry and Topology.  
      
    \bibitem{He16}W. He: 
            \newblock \emph{K\"ahler-Ricci soliton and H-functional.}  
            \newblock Asian J. Math. \textbf{20} (2016), 645--664. 
      
    \bibitem{His16}T.~Hisamoto: 
       \newblock \emph{On the limit of spectral measures associated to a test configuration of a polarized K\"{a}hler manifold}.  
       \newblock J. Reine Angew. Math. \textbf{713} (2016), 129--148. 
    
    \bibitem{His19}T.~Hisamoto: 
     \newblock \emph{Geometric flow, Multiplier ideal sheaves and Optimal destabilizer for a Fano manifold.}
     \newblock to appear in the Journal of Geometric Analysis. 
    
    \bibitem{His23}T.~Hisamoto: 
     \newblock \emph{Quantization of the K\"ahler-Ricci flow and the optimal destabilizer for a Fano manifold.}
     \newblock in preparation. 
    
    
    
    \bibitem{IOOS22}L. IOOS: 
    \newblock \emph{Balanced metrics for K\"ahler-Ricci solitons and 
    quantized Futaki invariants}. 
    \newblock J. Funct. Anal. \textbf{282} (2022) no. 8, 109400

    
    
    \bibitem{LX14}C. Li and C. Xu: 
      \newblock \emph{Special test configuration and K-stability of Fano varieties.}
      \newblock Ann. of Math. (2) \textbf{180} (2014), no. 1, 197--232.
    
    \bibitem{LXZ21}Y.~Liu, C.~Xu, and Z.~Zhuang: 
      \newblock \emph{Finite generation for valuations computing stability thresholds and applications to K-stability.}
      \newblock Ann. of Math. \textbf{196} (2022), 507--566. 
    
    \bibitem{Mab86}T.~Mabuchi: 
     \newblock \emph{K-energy maps integrating Futaki invariants.}
     \newblock Tohoku Math. J. (2) \textbf{38} (1986), no. 4, 575--593. 
    
    
    \bibitem{N90}A.~M. Nadel: 
       \newblock \emph{Multiplier Ideal Sheaves and K\"ahler-Einstein Metrics of Positive Scalar Curvature}. 
       \newblock Ann. of Math. \textbf{132} (1990), no. 3, 549--596. 
       
    
    \bibitem{Pali08}N. Pali: 
        \newblock \emph{Characterization of Einstein-Fano manifolds via the K\"ahler-Ricci flow}. 
        \newblock Indiana Univ. Math. J., \textbf{57} (2008), no. 7, 3241--3274. 
    
    \bibitem{Pere02}G. Perelman: 
        \newblock \emph{The entropy formula for the Ricci flow and its geometric applications.}
        \newblock \texttt{arXiv:math/0211159}. 
    
        
    
    \bibitem{PSSW09}D.~H. Phong, J. Song, J. Sturm and B. Weinkove: 
        \newblock \emph{The K\"ahler-Ricci flow and the $\bar{\partial}$-operator on vector fields}. 
        \newblock J. Differential Geom. \textbf{81} (2009), 631--647. 
        
    
    
    
    
     \bibitem[RTZ20]{RTZ20}Y.~A. Rubinstein, G. Tian, and Zhang: 
     \newblock \emph{Basis divisors and balanced metrics}. 
     \newblock To appear in J. Reine Angew. Math.. 

    \bibitem{ST19}S. Saito and R. Takahashi: 
      \newblock  \emph{Stability of anti-canonically balanced metrics.}
       \newblock Asian J. Math. \textbf{23} (2019), no. 6, 1041--1058. 
    
    \bibitem{Sem92}S. Semmes: 
        \newblock \emph{Complex Monge--Amp\`{e}re and symplectic manifolds.}
        \newblock Amer. J. Math. \textbf{114} (1992), no. 3, 495--550.
    
    
        
    
    
    
    
    
        
    \bibitem{Yao17}Y. Yao: 
         \newblock \emph{Mabuchi Metrics and Relative Ding Stability of Toric Fano Varieties}. 
         \newblock International Mathematics Research Notices, 2022-24 (2022), 19790--19853. 
         
    \bibitem{Zhang21}K. Zhang: 
         \newblock \emph{A quantization proof of the uniform Yau-Tian-Donaldson conjecture}. 
         \newblock to appear in the Journal of the European Mathematical Society. 
               
    
\end{thebibliography}
\end{document}